\documentclass[12pt]{article}
\usepackage{amsmath,amsfonts,amssymb,amsthm}
\usepackage{epsfig}

\theoremstyle{plain}
\newtheorem{theorem}{Theorem}[section]
\newtheorem{corollary}[theorem]{Corollary}
\newtheorem{lemma}[theorem]{Lemma}
\newtheorem{proposition}[theorem]{Proposition}

\theoremstyle{definition}
\newtheorem{definition}[theorem]{Definition}

\numberwithin{equation}{section}

\newcommand{\R}{{\mathbb R}}
\newcommand{\N}{{\mathbb N}}

\providecommand{\vint}[1]{\mathchoice
          {\mathop{\vrule width 5pt height 3 pt depth -2.5pt
                  \kern -9pt \kern 1pt\intop}\nolimits_{\kern -5pt{#1}}}
          {\mathop{\vrule width 5pt height 3 pt depth -2.6pt
                  \kern -6pt \intop}\nolimits_{\kern -3pt{#1}}}
          {\mathop{\vrule width 5pt height 3 pt depth -2.6pt
                  \kern -6pt \intop}\nolimits_{\kern -3pt{#1}}}
          {\mathop{\vrule width 5pt height 3 pt depth -2.6pt
                  \kern -6pt \intop}\nolimits_{\kern -3pt{#1}}}}
\newcommand{\pip}{\varphi}
\newcommand{\eps}{\varepsilon}
\newcommand{\loc}{{\mbox{\scriptsize{loc}}}}

\newcommand{\BV}{\mathrm{BV}}

\DeclareMathOperator{\Lip}{Lip}

\DeclareMathOperator*{\aplim}{\mathrm{ap\,lim}}

\begin{document}
\title{Stability and continuity of functions of least gradient
\thanks{{\bf 2000 Mathematics Subject Classification}: Primary 26B30; Secondary 31E99, 
31C45, 26B15. \hfill \break {\it Keywords\,}: least gradient, BV, metric measure space, approximate continuity, continuity, stability, jump set, Dirichlet problem, minimal surface.}
}
\author{H.~Hakkarainen, R.~Korte, P.~Lahti and N.~Shan\-mugalingam
\footnote{R. K. was  supported by Academy of Finland, grant \#250403. P. L. and H. H. were supported
by the Finnish Academy of Science and Letters, the Vilho, Yrj\"o and Kalle V\"ais\"al\"a Foundation.
N.S.~was partially supported by the NSF
grant~DMS-1200915.  Part of this research was
conducted while the authors visited Institut Mittag-Leffler, Sweden; they wish to thank this
institution for its kind hospitality. The authors also wish to thank Juha Kinnunen for his kind encouragement in this project and 
Michele Miranda Jr.~for useful discussion regarding Proposition~3.1.}}
\maketitle

\begin{abstract} 
In this note we prove that on metric measure spaces, functions of least gradient, as well as local minimizers of the area functional (after modification on a set of measure zero) are continuous
everywhere outside their jump sets. As a tool, 
we develop some stability properties of sequences of least gradient functions. We also apply these tools
to prove a maximum principle for functions of least gradient that arise as solutions to a Dirichlet problem.
\end{abstract}

\section{Introduction}

The theory of minimal surfaces in the Euclidean setting has been studied extensively, for example, in~\cite{Ah}, \cite{R},
\cite{BDeGG}, \cite{Giu84}, \cite{Sim}, \cite{Sim2}, \cite{SWZ} from the point of view of regularity. The literature on this
subject is extensive, and it is impossible to list all references; only a small sampling is given here.
Much of this study had been
in the direction of understanding the regularity of minimal surfaces obtained (locally) as graphs of functions. 
However, the work of \cite{BDeGG}, \cite{SWZ} and~\cite{Z} and others studies more general ``least gradient" functions
and their regularity, and it is shown in~\cite{SWZ} that if boundary data is Lipschitz continuous and the (Euclidean)
boundary of a domain has positive mean curvature, then the least gradient solution to the corresponding Dirichlet
problem is locally Lipschitz continuous in the domain. However, such Lipschitz regularity has been shown to fail even
in a simple weighted Euclidean setting, see~\cite{HKL}, where an example is given of a solution with jump discontinuities in the domain, even though the boundary data is Lipschitz continuous.
Therefore, in a more general setting, it is
natural to ask whether functions of least gradient are continuous
outside their jump sets. The principal goal
of this note is to show that every function of least gradient is necessarily continuous outside its jump set, even when
the boundary data is not continuous. 

The setting we consider here is that of a complete metric measure space $X=(X,d,\mu)$ equipped with a doubling 
Borel regular outer measure $\mu$ supporting a $(1,1)$-Poincar\'e inequality. We consider functions of bounded
variation in the sense of~\cite{A2}, \cite{M}, and~\cite{AMP}, and  functions of least gradient
in a domain $\Omega\subset X$. 

In considering regularity properties of functions of least gradient, we need some tools related to stability properties of
least gradient function families. Therefore we extend the study to also include 
questions related to stability properties of least gradient functions (minimizers) and 
quasiminimizers. We show that being a function of least gradient
is a property preserved under $L^1_{\loc}(\Omega)$-convergence. 
We then obtain partial regularity results for functions of least gradient. 
Namely, we show that such minimizers are continuous at points of approximate continuity, that is, away from the
jump discontinuities of the function. Observe that by the results of~\cite{AMP}, the jump set of a BV function has
$\sigma$-finite co-dimension $1$ Hausdorff measure; hence there is a plenitude of points where the least gradient
function is continuous.
 
As a further application of the tools developed to study the above regularity, we  
obtain a maximum principle for least gradient functions obtained as solutions to a Dirichlet problem.

In tandem with the development of least gradient theory in the metric setting, the papers~\cite{HKL} and~\cite{HKLL}
develop the existence and trace theory of minimizers of functionals of linear growth in the metric setting (analogously
to the problem considered by Giusti in~\cite{Giu84}). In the case that the function $f$ of linear growth satisfies
$f(0)=0$ or $\liminf_{t\to 0^+}(f(t)-f(0))/t>0$, the results of this note can be adapted to study the regularity
and maximum principle properties of the associated minimizers, but mostly we limit ourselves to the least gradient theory.
However, in the case of the area functional $f(t)=\sqrt{1+t^2}$, we give an explicit proof that minimizers of this functional are continuous at points of approximate continuity.

This paper is organized as follows. In Section~2 we introduce the concepts and background needed for our study, and in Section~3 we consider the stability problem for least gradient functions on a given
domain. In Section~4 we use the tools developed in Section~3 to show that least gradient functions are continuous
everywhere outside their jump sets. In Section 5 we give a further application of the tools
 developed in Section~3 to prove a maximum principle for functions of least gradient that arise as solutions to a Dirichlet problem. In the final section of this paper we extend the continuity result of Section 4 to the case of the area functional.

\section{Notation and background}

In this section we
introduce the notation and the problems we consider.

In this paper, $(X,d,\mu)$ is a complete metric space equipped
with a metric $d$ and a Borel regular outer measure $\mu$.
The measure is assumed to be doubling, meaning that there exists a constant $c_d>0$ such that
\[
0<\mu(B(x,2r))\leq c_d\mu(B(x,r))<\infty
\]
for every ball $B(x,r):=\{y\in X:\,d(y,x)<r\}$.

We say that a property holds for almost every $x\in X$, or a.e. $x\in X$,  if there is a set $A\subset X$ with $\mu(A)=0$ and the property holds outside $A$. We will use the letter $C$ to denote a positive constant whose value is not
necessarily the same at each occurrence.

We recall that a complete metric space endowed with a doubling measure is proper,
that is, closed and bounded sets are compact. Since $X$ is proper, for any open set $\Omega\subset X$
we define e.g. $\textrm{Lip}_{\loc}(\Omega)$ as the space of
functions that are Lipschitz in every $\Omega'\Subset\Omega$.
The notation $\Omega'\Subset\Omega$ means that $\Omega'$ is open and that $\overline{\Omega'}$ is a
compact subset of $\Omega$.

For any set $A\subset X$ and $0<R<\infty$, the restricted spherical Hausdorff content
of codimension $1$ is defined as
\[
\mathcal{H}_{R}(A):=\inf\left\{ \sum_{i=1}^{\infty}\frac{\mu(B(x_{i},r_{i}))}{r_{i}}:\,A\subset\bigcup_{i=1}^{\infty}B(x_{i},r_{i}),\,r_{i}\le R\right\}.
\]
The Hausdorff measure of codimension $1$ of a set
$A\subset X$ is
\[
\mathcal{H}(A):=\lim_{R\rightarrow0}\mathcal{H}_{R}(A).
\]


A curve is a rectifiable continuous mapping from a compact interval
to $X$, and is usually denoted by the symbol $\gamma$.
A nonnegative Borel function $g$ on $X$ is an upper gradient 
of an extended real-valued function $u$
on $X$ if for all curves $\gamma$ on $X$, we have
\[
|u(x)-u(y)|\le \int_\gamma g\,ds
\]
whenever both $u(x)$ and $u(y)$ are finite, and 
$\int_\gamma g\, ds=\infty $ otherwise.
Here $x$ and $y$ are the end points of $\gamma$.
Given a locally Lipschitz continuous function $u$ on $\Omega$, we define the local Lipschitz constant of $u$ as
\begin{equation}\label{eq:definition of local lipschitz constant}
 \Lip u(x):=\limsup_{r\to0^+}\sup_{y\in B(x,r)\setminus\{x\}} \frac{|u(y)-u(x)|}{d(y,x)}.
\end{equation}
In particular, $\Lip u$ is known to be an upper gradient of $u$, see e.g. \cite[Proposition 1.11]{C}.

We consider the following norm
\[
\Vert u\Vert_{N^{1,1}(X)}:=\Vert u\Vert_{L^1(X)}+\inf_g\Vert g\Vert_{L^1(X)},
\]
with the infimum taken over all upper gradients $g$ of $u$. 
The Newton-Sobolev, or Newtonian space is defined as
\[
N^{1,1}(X):=\{u:\|u\|_{N^{1,1}(X)}<\infty\}/{\sim},
\]
where the equivalence relation $\sim$ is given by $u\sim v$ if and only if 
\[
\Vert u-v\Vert_{N^{1,1}(X)}=0.
\]
Similarly, we can define $N^{1,1}(\Omega)$ for an open set $\Omega\subset X$. For more on Newtonian spaces, we refer to \cite{S} or \cite{BB}.

Next we recall the definition and basic properties of functions
of bounded variation on metric spaces.
A good discussion of BV functions in the Euclidean setting can be
found in~\cite{Zie89} and~\cite{AmbFP00}.
In the metric setting, the corresponding theory was first studied by
Miranda Jr.~in~\cite{M}, and further developed in~\cite{A1}, \cite{A2}, \cite{AMP}, and~\cite{KKST}.
For $u\in L^1_{\text{loc}}(X)$, we define the total variation of $u$ as
\[
\|Du\|(X):=\inf\Big\{\liminf_{i\to\infty}\int_X g_{u_i}\,d\mu:\,N^{1,1}_{\loc}(X)\ni u_i\to u\textrm{ in } L^1_{\text{loc}}(X)\Big\},
\]
where $g_{u_i}$ is an upper gradient of $u_i$.
In the literature, the upper gradient is sometimes replaced by a local Lipschitz constant and the approximating functions $u_i$ are sometimes required to be locally Lipschitz, but all the definitions give the same result according to \cite[Theorem 1.1]{ADiMa}.
We say that a function $u\in L^1(X)$ is of bounded variation, 
and denote $u\in\BV(X)$, if $\|Du\|(X)<\infty$. 
Moreover, a $\mu$-measurable set $E\subset X$ is said to be of finite perimeter if $\|D\chi_E\|(X)<\infty$.
By replacing $X$ with an open set $\Omega\subset X$ in the definition of the total variation, we can define $\|Du\|(\Omega)$.
The $\BV$ norm is given by
\[
\Vert u\Vert_{\BV(\Omega)}:=\Vert u\Vert_{L^1(\Omega)}+\Vert Du\Vert(\Omega).
\]
For an arbitrary set $A\subset X$, we define
\[
\|Du\|(A):=\inf\bigl\{\|Du\|(\Omega):\,\Omega\supset A,\,\Omega\subset X
\text{ is open}\bigr\}.
\]
If $u\in\BV(\Omega)$, $\|Du\|(\cdot)$ Radon measure of finite mass in $\Omega$ \cite[Theorem 3.4]{M}.

We also denote the perimeter of $E$ in $\Omega$ by
\[
P(E,\Omega):=\|D\chi_E\|(\Omega).
\]
We have the following coarea formula given by Miranda in \cite[Proposition 4.2]{M}: if $F\subset X$ is a Borel set 
and $u\in \BV(X)$, we have
\begin{equation}\label{eq:coarea}
\|Du\|(F)=\int_{-\infty}^{\infty}P(\{u>t\},F)\,dt.
\end{equation}

The approximate upper and lower limits of an extended real-valued function $u$ on $X$ are:
\begin{align}\label{eq:appcont}
u^{\vee}(x)&:=\inf\left\{t\in\R:\,\lim\limits_{r\to 0^+}\frac{\mu\left(\{u\leq t\}\cap B(x,r)\right)}{\mu\left(B(x,r)\right)}=1\right\}, \\
u^{\wedge}(x)&:=\sup\left\{t\in\R:\,\lim\limits_{r\to 0^+}\frac{\mu\left(\{u\geq t\}\cap B(x,r)\right)}{\mu\left(B(x,r)\right)}=1\right\}\notag.
\end{align}
If $u^{\vee}(x)=u^{\wedge}(x)$, we denote by
\[
\aplim\limits_{y\to x} u(y):=u^{\vee}(x)=u^{\wedge}(x)
\]
the \textit{approximate limit} of function $u$ at $x\in X$. The function $u$ is \textit{approximately continuous} at $x\in X$ if
\[
\aplim\limits_{y\to x} u(y)=u(x).
\]

We assume that the space $X$ supports a $(1,1)$-Poincar\'e inequality,
meaning that for some constants $c_P>0$ and $\lambda \ge 1$, for every
ball $B(x,r)$, for every locally integrable function $u$,
and for every upper gradient $g$ of $u$, we have 
\[
\vint{B(x,r)}|u-u_{B(x,r)}|\, d\mu 
\le c_P r\,\vint{B(x,\lambda r)}g\,d\mu,
\]
where 
\[
u_{B(x,r)}:=\vint{B(x,r)}u\,d\mu :=\frac 1{\mu(B(x,r))}\int_{B(x,r)}u\,d\mu.
\]

For BV functions, we get the following version of the $(1,1)$-Poincar\'e inequality. Given any locally integrable function $u$, by applying the $(1,1)$-Poincar\'e inequality to an approximating sequence of functions, we get
\[
\vint{B(x,r)}|u-u_{B(x,r)}|\, d\mu 
\le c_P r\,\frac{\Vert Du\Vert (B(x,\lambda r))}{\mu(B(x,\lambda r))}.
\]



\begin{definition}
We denote by $BV_c(\Omega)$ the functions $g\in BV(X)$ 
with compact support in $\Omega$, and by $BV_0(\Omega)$ the functions $g\in BV(X)$ such that $g=0$ $\,\mu$-a.e.~in 
$X\setminus\Omega$. 
\end{definition}

We next recall the various notions of minimizers; see also e.g.~\cite{BDeGG}.

\begin{definition}
A function $u\in BV_{\loc}(\Omega)$ is of \emph{least gradient} in $\Omega$ if whenever $g\in BV_c(\Omega)$ with 
$K=\text{supt}(g)$, we have
 \[
  \Vert Du\Vert (K)\le \Vert D(u+g)\Vert(K).
 \] 
 Following~\cite{KKLS}, we say that a set $E\subset X$ is \emph{of minimal surface in $\Omega$} if 
 $\chi_E\in BV_{\loc}(\Omega)$ and
\[
\Vert D\chi_E\Vert(K)\le \Vert D\chi_F\Vert(K)
\]
for all $\mu$-measurable sets $F\subset X$ for which $K:=\overline{F\Delta E}$ is a compact subset of $\Omega$.
A function $u\in BV_{\loc}(\Omega)$ is of \emph{$Q$-quasi least gradient} in $\Omega$ for $Q\ge 1$ if
 \[
  \Vert Du\Vert (K)\le Q\, \Vert D(u+g)\Vert(K)
 \] 
 for all $g\in BV_c(\Omega)$ and $K=\text{supt}(g)$.
\end{definition}

In our definition of sets of minimal surface, observe that the sets $F$ are precisely ones that can be written as
$(E\cup\widehat{F})\setminus\widehat{G}$ for some relatively compact $\mu$-measurable subsets
$\widehat{F},\widehat{G}$ of $\Omega$. Thus our definition is consistent with \cite{KKLS}. Observe also that by the definition and measure property of the total variation, the support of the perturbation $K$ can always be replaced by any larger relatively compact subset of $\Omega$ in the above definitions.

We also consider the corresponding Dirichlet problem in the metric setting. Recall that in the Euclidean setting with a 
Lipschitz domain $\Omega$, the least gradient problem with a given boundary datum $f$ can be stated as
\[
\min\left\{\Vert Du\Vert(\Omega): \,u\in\BV(\Omega),\,u=f\,\text{ on }\,\partial\Omega\right\}.
\]
If we do not require, for instance, the continuity of the boundary data, then the boundary value has to be understood in 
a suitable sense, e.g. as a trace of a BV function. One possibility is to consider a relaxed problem with a penalization term
\[
\Vert Du\Vert(\Omega)+\int_{\partial\Omega}|Tu-Tf|\,d\mathcal{H}^{n-1},
\] 
where $Tu$ and $Tf$ are traces of $u$ and $f$ on $\partial\Omega$, provided such traces exist, and $\mathcal H^{n-1}$ is the $n-1$-dimensional Hausdorff measure.
In order to avoid working directly with traces of BV functions, we consider an equivalent problem formulated in a 
larger domain, namely $X$. 

\begin{definition}\label{def_leastgrad_dir}
Given an open set $\Omega\subset X$ and $f\in BV(X)$, we say that $u\in BV(X)$ is a \emph{solution to the Dirichlet problem for least gradients in $\Omega$
 with boundary data $f$}, if $u-f\in BV_0(\Omega)$ and
 whenever $g\in BV_0(\Omega)$, we have
 \[
   \Vert Du\Vert(\overline{\Omega})\le \Vert D(u+g)\Vert(\overline{\Omega}).
 \]
\end{definition}

Note that such a solution is a function of least gradient and that this problem is the same as minimizing 
$\Vert Du\Vert(\overline{\Omega})$ over all $u\in BV(X)$ that satisfy $u-f\in BV_0(\Omega)$. Furthermore, since we have
$\Vert Du\Vert(X)=\Vert Du\Vert(\overline{\Omega})+\Vert Df\Vert(X\setminus\overline{\Omega})$, the problem is 
equivalent to, and the minimizers are the same as, when minimizing
$\Vert Du\Vert(X)$ over all $u\in BV(X)$ that satisfy $u-f\in BV_0(\Omega)$. Thus the problem we consider here is of
the same type as in \cite{HKL}, \cite{HKLL}.

\section{Stability of least gradient function families}

To answer questions regarding continuity properties of functions of least gradient (outside their jump sets),
it turns out that tools related to the stability of least gradient function families are needed. In this section we therefore study such stability properties.

The following stability result is key in the study of continuity properties of functions of least gradient. 
In the Euclidean setting, the proof of this result found in~\cite[Theorema 3]{M-Sr} is 
based on trace theorems for BV functions. Given the lack of trace theorems in the
metric setting, the proof given here is different from that of~\cite{M-Sr}, but the philosophy underlying the
proof is the same. We thank Michele Miranda for explaining the proof in~\cite{M-Sr} (which is in Italian) and for 
suggesting a way to modify it.

\begin{proposition}\label{stab-min}
Let $\Omega\subset X$ be an open set, let $u_k\in BV_{loc}(\Omega)$, $k\in\N$, be a sequence of functions
of least gradient in $\Omega$, and suppose that $u$ is a function in $\Omega$ such that 
$u_k\to u$ in $L^1_{loc}(\Omega)$. Then $u$ is a function of least gradient in $\Omega$.
\end{proposition}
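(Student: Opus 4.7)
The plan is to verify the two requirements for $u$ to be of least gradient: membership in $BV_{\loc}(\Omega)$ and the minimization inequality against every compactly supported perturbation. For the former, I would use a standard cutoff--competitor argument: fix open sets $V\Subset\widetilde V\Subset\Omega$ and a Lipschitz function $\varphi$ with $\varphi\equiv1$ on $V$ and $\supp\varphi\subset\widetilde V$; feeding the perturbation $-\varphi u_k\in BV_c(\Omega)$ into the least gradient inequality for $u_k$ and applying a BV Leibniz estimate to $(1-\varphi)u_k$ yields the bound $\|Du_k\|(V)\leq\Lip(\varphi)\|u_k\|_{L^1(\widetilde V)}$, which is uniform in $k$ by $L^1_{\loc}$ convergence. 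Lower semicontinuity of total variation then gives $u\in BV_{\loc}(\Omega)$.

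For the minimization inequality, fix $g\in BV_c(\Omega)$ with $K:=\supp g$. The essential difficulty is that the obvious candidate $u_k+g\to u+g$ only lets lower semicontinuity pass in the \emph{wrong} direction for $\|D(u_k+g)\|$. To sidestep this, I would pick open sets $K\subset V'\Subset V\Subset\Omega$ and a Lipschitz cutoff $\varphi$ with $\varphi\equiv1$ on $V'$ and $\supp\varphi\subset V$, and form the competitor
\[
w_k:=(1-\varphi)u_k+\varphi(u+g)=u_k+\varphi(u+g-u_k).
\]
Since $w_k-u_k$ is compactly supported in $\overline V$, it is admissible and the least gradient property gives $\|Du_k\|(\overline V)\leq\|Dw_k\|(\overline V)$. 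The crucial gain is that $w_k\equiv u+g$ on $V'$, so $\|Dw_k\|(V')=\|D(u+g)\|(V')$ \emph{exactly}, with no limit needed. On the shell $\overline V\setminus V'$ we have $g\equiv 0$ (as $K\subset V'$), hence $w_k=u+(1-\varphi)(u_k-u)$ there, and the Leibniz estimate yields
\[
\|Dw_k\|(\overline V\setminus V')\leq 2\|Du\|(\overline V\setminus V')+\|Du_k\|(\overline V\setminus V')+\Lip(\varphi)\|u_k-u\|_{L^1(V)}.
\]

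Substituting this bound into $\|Du_k\|(\overline V)\leq\|Dw_k\|(\overline V)$ and cancelling the common $\|Du_k\|(\overline V\setminus V')$ from both sides leaves
\[
\|Du_k\|(V')\leq\|D(u+g)\|(V')+2\|Du\|(\overline V\setminus V')+\Lip(\varphi)\|u_k-u\|_{L^1(V)}.
\]
As $k\to\infty$, lower semicontinuity gives $\|Du\|(V')\leq\liminf_k\|Du_k\|(V')$ and the $L^1$ term vanishes, producing $\|Du\|(V')\leq\|D(u+g)\|(V')+2\|Du\|(\overline V\setminus V')$. Now shrinking $V$ down to $V'$ through radii $r$ for which $\|Du\|$ does not charge $\{\dist(\cdot,K)=r\}$ (only countably many bad $r$) sends $2\|Du\|(\overline V\setminus V')$ to zero, so $\|Du\|(V')\leq\|D(u+g)\|(V')$. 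Finally $g\equiv 0$ on $V'\setminus K$ forces $\|D(u+g)\|(V'\setminus K)=\|Du\|(V'\setminus K)$, and cancelling this shared piece yields the desired $\|Du\|(K)\leq\|D(u+g)\|(K)$. The main obstacle throughout is precisely the missing upper semicontinuity of total variation; the cutoff trick overcomes it by planting the \emph{known} target $u+g$ inside $V'$ rather than the floating $u_k+g$, confining the $u_k-u$ discrepancy to a thin shell where $L^1_{\loc}$ convergence together with a Leibniz bound renders it negligible in the limit.
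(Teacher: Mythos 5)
Your proof is correct, and it is essentially the paper's argument: the same competitor $w_k=\varphi(u+g)+(1-\varphi)u_k$, the same BV Leibniz estimate to control it, and lower semicontinuity to pass to the limit on the inner set (your first step simply re-derives the De Giorgi inequality that the paper quotes from the quasiminimal-surface literature to get the uniform local bound on $\Vert Du_k\Vert$). The one genuine divergence is how the shell term is disposed of. The paper keeps $\liminf_k\Vert Du_k\Vert(K_\eps\setminus K)$ on the right-hand side, extracts a weak* convergent subsequence $\Vert Du_k\Vert\to\nu$, bounds the shell term by $\nu(\overline{K_\eps\setminus K})$, and chooses $K$ with $\nu(\partial K)=0$ before letting $\eps\to0$. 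You instead cancel the finite quantity $\Vert Du_k\Vert(\overline V\setminus V')$ from both sides of the minimality inequality, which leaves an error depending only on the limit measure $\Vert Du\Vert$ of the shell, and then kill it by choosing spheres $\{\dist(\cdot,K)=r\}$ not charged by $\Vert Du\Vert$. Both devices are valid; yours is slightly more economical in that it avoids the diagonalization and weak* compactness of the sequence of variation measures, at the modest cost of having to fix $V'$ as a good distance-neighborhood of $K$ from the outset and of invoking the (standard, but worth stating) fact that the Leibniz bound localizes so that the $(1-\varphi)\,d\Vert Du_k\Vert$ contribution on $V'$ vanishes, which is what makes $\Vert Dw_k\Vert(V')=\Vert D(u+g)\Vert(V')$ and the subsequent cancellation legitimate.
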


An analogous statement holds for sequences of $Q$-quasi least gradient functions; the $L^1_{\loc}(\Omega)$-limits
of such sequences are also $Q$-quasi least gradient functions. The proof is mutatis mutandis the same as the proof of the
above proposition; we leave the interested reader to verify this.

To prove the above proposition, we need the following version of the product rule (Leibniz rule) for functions of bounded
variation.
\begin{lemma}\label{lem:Leibniz rule}
If $\Omega$ is an open set, $u,v\in BV(\Omega)$ and $\eta\in \Lip(\Omega)$, then $\eta u+(1-\eta)v\in BV(\Omega)$ with 
\[
  d\Vert D(\eta u+(1-\eta)v)\Vert\le \eta \,d\Vert Du\Vert+(1-\eta)\, d\Vert Dv\Vert+|u-v|g_{\eta}\, d\mu,
\]
where $g_{\eta}$ is any bounded upper gradient of $\eta$.
\end{lemma}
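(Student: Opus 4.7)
The strategy is to reduce to the locally Lipschitz setting by approximation, prove a pointwise Leibniz bound on local Lipschitz constants, and then pass to the limit using the lower semicontinuity in the definition of total variation. Throughout we assume the natural convex-combination setting $0 \le \eta \le 1$ so that both sides of the claimed inequality are non-negative measures.

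First, I would choose approximating sequences $u_i, v_i \in \liploc(\Omega)$ with $u_i \to u$ and $v_i \to v$ in $L^1_{\loc}(\Omega)$, and with the measures $\Lip u_i\, d\mu$ and $\Lip v_i\, d\mu$ converging weakly$^*$ on $\Omega$ to $d\|Du\|$ and $d\|Dv\|$, respectively. This is available from the locally Lipschitz formulation of the total variation (equivalence via~\cite[Theorem~1.1]{ADiMa}) combined with a standard partition-of-unity/exhaustion construction; if one only has the convergence of total masses, the argument below still goes through after working locally on each $\Omega' \Subset \Omega$ and using regularity of $\|Du\|$. Set $w_i := \eta u_i + (1-\eta)v_i \in \liploc(\Omega)$; boundedness of $\eta$ on compacta gives $w_i \to w := \eta u + (1-\eta)v$ in $L^1_{\loc}(\Omega)$.

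The heart of the argument is the pointwise identity, valid for every $x, y \in \Omega$,
\[
w_i(y) - w_i(x) = \eta(x)\bigl[u_i(y) - u_i(x)\bigr] + (1-\eta(x))\bigl[v_i(y) - v_i(x)\bigr] + \bigl[\eta(y) - \eta(x)\bigr]\bigl[u_i(y) - v_i(y)\bigr].
\]
Dividing by $d(y,x)$, taking $\limsup$ as $y \to x$, and using continuity of $u_i$ and $v_i$ at $x$, one obtains
\[
\Lip w_i(x) \le \eta(x)\, \Lip u_i(x) + (1-\eta(x))\, \Lip v_i(x) + |u_i(x) - v_i(x)|\, \Lip \eta(x).
\]

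Next, fix any open $U \Subset \Omega$. By lower semicontinuity and the definition of $\|Dw\|$,
\[
\|Dw\|(U) \le \liminf_{i\to\infty} \int_U \Lip w_i\, d\mu,
\]
and the pointwise bound above yields a decomposition into three integrals. The first two pass to the limit by weak$^*$ convergence of $\Lip u_i\, d\mu$ and $\Lip v_i\, d\mu$ combined with continuity of the test functions $\eta$ and $1-\eta$ (chosen on a slightly larger open set to avoid boundary issues), producing $\int_U \eta\, d\|Du\|$ and $\int_U (1-\eta)\, d\|Dv\|$; the third passes by $L^1_{\loc}$-convergence of $u_i - v_i$ together with boundedness of $\Lip \eta$, producing $\int_U |u-v|\, \Lip \eta\, d\mu$. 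Finally, since for locally Lipschitz $\eta$ in a doubling space supporting a $(1,1)$-Poincar\'e inequality the function $\Lip \eta$ is a representative of the minimal $1$-weak upper gradient, $\Lip \eta \le g_\eta$ $\mu$-a.e.\ for any other upper gradient $g_\eta$, so the third term is bounded by $\int_U |u-v|\, g_\eta\, d\mu$. Letting $U$ range over a basis of relatively compact open subsets and invoking regularity of the Radon measures involved yields the claimed inequality of measures on $\Omega$, and in particular $w \in BV(\Omega)$.

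The main technical obstacle is securing the weak$^*$ convergence $\Lip u_i\, d\mu \rightharpoonup d\|Du\|$ (rather than the weaker information of convergence of total masses), which one handles via the standard Lipschitz-density arguments in metric BV theory; everything else is a careful but routine limit passage.
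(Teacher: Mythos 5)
Your proof is correct and follows essentially the same route as the paper's: approximate $u$ and $v$ by functions admitting genuine (upper) gradients with $g_{u_i}\,d\mu\rightharpoonup d\Vert Du\Vert$ weakly*, apply a Leibniz bound at the level of the approximants, and pass to the limit via lower semicontinuity, weak* convergence against compactly supported cutoffs, and regularity of the measures. The only cosmetic difference is that the paper uses $N^{1,1}_{\loc}$ approximants and quotes the product rule for upper gradients from Bj\"orn--Bj\"orn, so the arbitrary bounded upper gradient $g_\eta$ enters directly, whereas your pointwise computation produces $\Lip\eta$ and you must then invoke the identification of $\Lip\eta$ with the minimal $1$-weak upper gradient (Cheeger) to dominate it by $g_\eta$ --- a valid but slightly heavier extra step.
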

\begin{proof}
According to \cite[Lemma 6.2.1]{Cam08} or \cite[Proposition 3.7]{HKLL}, we can pick sequences $u_i\in N^{1,1}_{\loc}(\Omega)$ and $v_i\in N^{1,1}_{\loc}(\Omega)$ such that $u_i\to u$ in $L^1_{\loc}(\Omega)$, $v_i\to v$ in $L^1_{\loc}(\Omega)$, and $g_{u_i}\,d\mu\to d\Vert Du\Vert$, $g_{v_i}\,d\mu\to d\Vert Dv\Vert$ weakly* in $\Omega$, where $g_{u_i}$ and $g_{v_i}$ are upper gradients of $u_i$ and $v_i$, respectively. Then $\eta u_i+(1-\eta)v_i\in N^{1,1}_{\loc}(\Omega)$, $\eta u_i+(1-\eta)v_i\to \eta u+(1-\eta)v$ in $L^1_{\loc}(\Omega)$, and every $\eta u_i+(1-\eta)v_i$ has an upper gradient
\[
\eta g_{u_i}+(1-\eta) g_{v_i}+|u_i-v_i|g_{\eta},
\]
see \cite[Lemma 2.18]{BB}. Now take any open sets $U'\Subset U\subset\Omega$, and any $\psi\in\Lip_c(U)$ with $0\le\psi\le 1$ and $\psi=1$ in $U'$. Then we have
\begin{align*}
\Vert D(\eta u+(1-&\eta)v)\Vert(U')\le\liminf_{i\to\infty}\int_{U'}\eta g_{u_i}+(1-\eta) g_{v_i}+|u_i-v_i|g_{\eta}\,d\mu\\
      &\le\liminf_{i\to\infty}\int_{U}\psi(\eta g_{u_i}+(1-\eta) g_{v_i}+|u_i-v_i|g_{\eta})\,d\mu\\
      &= \int_{U}\psi \eta\,d\Vert Du\Vert+\int_{U}\psi(1-\eta)\,d\Vert Dv\Vert+\int_{U}\psi|u-v|g_{\eta}\,d\mu\\
      &\le \int_{U} \eta\,d\Vert Du\Vert+\int_{U}(1-\eta)\,d\Vert Dv\Vert+\int_{U}|u-v|g_{\eta}\,d\mu.
\end{align*}
By the fact that $U'\Subset U$ was arbitrary and by the inner regularity of the total variation, see the proof of \cite[Theorem 3.4]{M}, this implies that
\[
\Vert D(\eta u+(1-\eta)v)\Vert(U)\le \int_{U}\eta \,d\Vert Du\Vert+\int_{U}(1-\eta) \,d\Vert Dv\Vert+\int_{U}|u-v|g_{\eta}\,d\mu.
\]
Since the variation measure of arbitrary sets is defined by approximation with open sets, we have the result.
\end{proof}

We will also need the following inequality from~\cite[Inequality~(4.3)]{KKLS} for functions of least gradient in $\Omega$.
Whenever $B=B(x,R)\Subset\Omega$, we have the De Giorgi inequality for every $0<r<R$:
\begin{equation}\label{eq:De Giorgi inequality}
  \Vert Du\Vert (B(x,r))\le \frac{C}{R-r}\int_{B(x,R)}|u|\, d\mu.
\end{equation}
Observe that in \cite{KKLS}, this is proved when $u$ is the characteristic function of a set, but the proof works for more general functions as well.

\begin{proof}[Proof of Proposition~\ref{stab-min}]
Take any compact set $K_1\subset \Omega$. Because $u_k\to u$ in $L^1_{\loc}(\Omega)$, we know that the functions $u_k$ are bounded in $L^1(K_2)$ for 
compact sets $K_2\subset \Omega$. Hence 
by covering $K_1$ by balls of radius $r$
so that concentric balls 
of radius $2r$ are relatively compact subsets of $\Omega$, and by then applying
the above De Giorgi inequality, we see that
\begin{equation}\label{eq:1}
  \sup_k\Vert Du_k\Vert(K_1)\le C_{K_1}<\infty.
\end{equation}
By the fact that $u_k\to u$ in $L^1_{\loc}(\Omega)$ and by the lower semicontinuity of the total variation, we can conclude that $u\in BV_{\loc}(\Omega)$.
To show that $u$ is of least gradient in $\Omega$, we fix a function $g\in BV(\Omega)$ such that the support $\widetilde{K}$ of
$g$ is a compact subset of $\Omega$. We need to show that
\[
  \Vert Du\Vert(\widetilde{K})\le \Vert D(u+g)\Vert(\widetilde{K}).
\]

By~\eqref{eq:1} we know that the sequence of Radon measures $\Vert Du_k\Vert$ is locally bounded in $\Omega$. Hence a diagonalization argument gives a subsequence, also denoted by $\Vert Du_k\Vert$, and a
Radon measure $\nu$ on $\Omega$, such that $\Vert Du_k\Vert\to\nu$ weakly$^*$ in $\Omega$. 

Let $K\subset\Omega$ be a compact set such that $\widetilde{K}\subset K$ and
\[
\nu(\partial K)=0, 
\]
and let $\eps>0$  such that $K_\eps:=\bigcup_{x\in K}B(x,\eps)\Subset\Omega$. 
We choose a Lipschitz function $\eta$ on $X$ such that $0\le \eta\le 1$ on $X$, $\eta=1$ in $K$, and
$\eta=0$ in $X\setminus K_{ \epsilon/2}$, and for each positive integer $k$ we set
\[
  g_k:=\eta(u+g)+(1-\eta)u_k.
\]
Note that $g_k=u_k$ on $\Omega\setminus K_{\eps/2}$, and so by the lower
semicontinuity of the total variation and the fact that
$u_k\to u$ in $L^1(K_\eps)$, the minimality of $u_k$, and the Leibniz rule of Lemma \ref{lem:Leibniz rule}, we have
\begin{align*}
 \Vert Du\Vert(K_\eps)&\le \liminf_{k\to\infty} \Vert Du_k\Vert(K_\eps)\\
        &\le \liminf_{k\to\infty}\Vert Dg_k\Vert(K_\eps)\\
       &\le \Vert D(u+g)\Vert(K_\eps)\\
               &\qquad\qquad+\liminf_{k\to\infty}\left[\Vert Du_k\Vert(K_\eps\setminus K)+C_\eta\int_{K_\eps\setminus K}|u-u_k|\, d\mu\right]\\
        &=  \Vert D(u+g)\Vert(K_\eps)+\liminf_{k\to\infty}\Vert Du_k\Vert(K_\eps\setminus K)\\
        &\le \Vert D(u+g)\Vert(K_\eps)+\nu(\overline{K_\eps\setminus K}).
\end{align*}
Letting $\eps\to 0$, we obtain
\[
   \Vert Du\Vert(K)\le \Vert D(u+g)\Vert(K)+\nu(\partial K)=\Vert D(u+g)\Vert(K).
\]
Since $K$ contains the support of $g$, we conclude that $u$ is of least gradient in $\Omega$.
\end{proof}

While the above proposition does not require the functions $u_k$ to be in the global space $BV(X)$,
the next stability result considers what happens when each $u_k\in BV(X)$ is a solution to the Dirichlet problem
for least gradients with boundary data in $BV(X)$.

\begin{proposition}
Let $\Omega$ be a bounded open set in $X$ such that $\mu(X\setminus\Omega)>0$.
Take a sequence of functions $f_k\in BV(X)$, $k\in\N$, and suppose that each $u_k\in BV(X)$ is a solution to
the Dirichlet problem for least gradients in $\Omega$ with boundary data $f_k$. 
Suppose also
that $f_k\to f$ in $BV(X)$ {\rm (}that is, $\Vert f-f_k\Vert_{L^1(X)}+\Vert D(f_k-f)\Vert(X)\to 0$ as $k\to\infty${\rm )}.
Then there is a function $u\in BV(X)$ 
such that a subsequence of $u_{k}$ converges to  $u$ in $L^{1}(\Omega)$, and 
$u$ is a solution to the Dirichlet problem for least gradients 
in $\Omega$ with boundary data $f$.
\end{proposition}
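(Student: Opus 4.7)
The strategy is to establish uniform $BV(X)$ bounds on $\{u_k\}$, extract an $L^1$-convergent subsequence, and verify that the limit $u$ solves the Dirichlet problem with datum $f$ by comparing each $u_k$ to a suitably corrected version of the competitor $u+g$. For the a priori bounds, note that $g := f_k - u_k \in BV_0(\Omega)$ is itself an admissible perturbation, so substituting it in Definition~\ref{def_leastgrad_dir} yields $\Vert Du_k\Vert(\overline{\Omega}) \le \Vert Df_k\Vert(\overline{\Omega})$, and the right-hand side is uniformly bounded because $f_k\to f$ in $BV(X)$. Combined with $u_k=f_k$ on $X\setminus\Omega$, this yields a uniform bound on $\Vert Du_k\Vert(X)$. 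For $L^1(X)$ control, I would fix a ball $B\supset\overline{\Omega}$ with $\mu(B\setminus\Omega)>0$, available because $\Omega$ is bounded and $\mu(X\setminus\Omega)>0$, and apply a Poincar\'e-type inequality to $u_k-f_k\in BV_0(\Omega)$, which vanishes on the set $B\setminus\Omega$ of positive relative measure in $B$; together with $u_k=f_k\to f$ in $L^1(X\setminus B)$, this bounds $\Vert u_k\Vert_{L^1(X)}$ uniformly. Standard $BV$-compactness on $B$ then yields, after passing to a subsequence, a limit $u_k\to u$ in $L^1(B)$, and defining $u:=f$ on $X\setminus B$ produces a function $u\in BV(X)$ with $u_k\to u$ in $L^1(X)$ by lower semicontinuity of the total variation.

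Passing to the limit in $u_k=f_k$ on $X\setminus\Omega$ shows $u=f$ almost everywhere there, so $u-f\in BV_0(\Omega)$. For the minimality condition, fix any $g\in BV_0(\Omega)$ and consider the corrected competitor
\[
v_k := u + g + (f_k - f).
\]
On $X\setminus\Omega$ one has $v_k=f_k$, so $v_k-f_k=(u-f)+g\in BV_0(\Omega)$, making $v_k$ admissible for the Dirichlet problem solved by $u_k$ with datum $f_k$. The minimality of $u_k$ and the triangle inequality for total variation give
\[
\Vert Du_k\Vert(\overline{\Omega}) \le \Vert Dv_k\Vert(\overline{\Omega}) \le \Vert D(u+g)\Vert(\overline{\Omega}) + \Vert D(f_k-f)\Vert(X),
\]
and the error term vanishes as $k\to\infty$ by hypothesis.

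The delicate step will be to pass to the limit on the left-hand side, establishing $\Vert Du\Vert(\overline{\Omega})\le \liminf_k \Vert Du_k\Vert(\overline{\Omega})$, because lower semicontinuity of the total variation under $L^1$ convergence is standard on open sets but fails in general on closed ones such as $\overline{\Omega}$. My plan is to sidestep this by applying lower semicontinuity on the whole space, namely $\Vert Du\Vert(X)\le \liminf_k\Vert Du_k\Vert(X)$, and then subtracting the contributions on the open complement $X\setminus\overline{\Omega}$: since $u=f$ and $u_k=f_k$ almost everywhere there, the total variations on this open set coincide with $\Vert Df\Vert(X\setminus\overline{\Omega})$ and $\Vert Df_k\Vert(X\setminus\overline{\Omega})$, respectively, and the latter converges to the former by the $BV$-convergence of $f_k$ to $f$. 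Combining this with the inequality from the previous paragraph yields $\Vert Du\Vert(\overline{\Omega})\le \Vert D(u+g)\Vert(\overline{\Omega})$, as required.
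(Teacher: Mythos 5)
Your proposal is correct and follows essentially the same route as the paper: the a priori bound $\Vert Du_k\Vert(\overline{\Omega})\le\Vert Df_k\Vert(\overline{\Omega})$, the Poincar\'e inequality for $BV_0(\Omega)$ plus compactness to extract an $L^1$-limit, lower semicontinuity on $\overline{\Omega}$ obtained by subtracting the open complement where $u_k=f_k$, and a competitor adjusted by $f_k-f$ (the paper tests against $f_k+g$, which is your $v_k$ up to relabeling the perturbation).
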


\begin{proof}
Let $B\Supset\Omega$ be a ball such that $\mu(B\setminus\Omega)>0$.
By the $(1,1)$-Poincar\'e inequality, if $v\in BV_0(\Omega)$, then
\[
 \int_{\Omega}|v|\, d\mu\le C_0\, \Vert Dv\Vert(\overline{\Omega}),
\]
where $C_0$ depends only on the radius of $B$, the Poincar\'e inequality constants, the doubling constant
of $\mu$, and the ratio $\mu(B\setminus\Omega)/\mu(B)$ \cite[Lemma 2.2]{KKLS}. 

By definition, we know that $u_k-f_k\in BV_0(\Omega)$, and hence for each positive integer $k$,
\begin{align*}
  \int_X  |u_k-f_k|\, d\mu+ \Vert D(u_k-f_k)\Vert(X) 
    &=\int_\Omega  |u_k-f_k|\, d\mu+ \Vert D(u_k-f_k)\Vert(\overline{\Omega})\\
    &\le [C_0+1]\, \Vert D(u_k-f_k)\Vert(\overline{\Omega})\\
    &\le [C_0+1]( \Vert Du_k\Vert(\overline{\Omega})+ \Vert Df_k\Vert(\overline{\Omega}))\\
    &\le 2[C_0+1]\, \Vert Df_k\Vert(\overline{\Omega}).
\end{align*}
The last inequality follows from the fact that $u_k$ is a solution to the Dirichlet
problem for least gradients with boundary data $f_k$. It follows that the sequence $\{u_k-f_k\}_k$ is a bounded sequence in
$BV_0(\Omega)\subset BV(X)$, and hence by the compact embedding theorem for $BV(X)$ (see~\cite[Theorem~3.7]{M}),
there is a subsequence, also denoted by $\{u_k-f_k\}_k$, and a function $v\in BV_0(\Omega)$, such that
$u_k-f_k\to v$ in $L^1_{\loc}(X)$, and so by the compactness of $\overline{\Omega}$,
we know that $u_k-f_k\to v$ in $L^1(\Omega)$. Hence $u_k\to f+v=:u$ in $L^1(\Omega)$,
with $u-f=v\in BV_0(\Omega)$.

By the lower semicontinuity of the total variation, we have
\begin{align*}
  \Vert Du\Vert(\overline{\Omega})+\Vert Df\Vert(X\setminus\overline{\Omega})
      &=\Vert Du\Vert (X)\\
      &\le \liminf_{k\to\infty} \Vert Du_k\Vert(X)\\
      &=\liminf_{k\to\infty}\left(\Vert Du_k\Vert(\overline{\Omega})+\Vert Df_k\Vert(X\setminus\overline{\Omega})\right)\\
      &=\liminf_{k\to\infty}\Vert Du_k\Vert(\overline{\Omega})+\Vert Df\Vert(X\setminus\overline{\Omega}),
\end{align*}
so that
\begin{equation}\label{eq:lower semicontinuity in closed set}
\Vert Du\Vert(\overline{\Omega})\le \liminf_{k\to\infty}\Vert Du_k\Vert(\overline{\Omega}).
\end{equation}
Now let $g\in BV_0(\Omega)$, and $h:=f+g$.
Furthermore, let $\widehat{f_k}:=f_k+g$.
Since each $u_k$ is a solution to the Dirichlet problem for least gradients with boundary data $f_k$,
we have $\Vert Du_k\Vert(\overline{\Omega})\le \Vert D\widehat{f_k}\Vert(\overline{\Omega})$. 
Moreover, $\widehat{f_k}-h=f_k-f\to 0$ in $BV(X)$ as $k\to\infty$.

By combining \eqref{eq:lower semicontinuity in closed set} with these facts, we get
\[
  \Vert Du\Vert(\overline{\Omega})\le \liminf_{k\to\infty}\Vert Du_k\Vert(\overline{\Omega})
     \le \liminf_{k\to\infty}\Vert D\widehat{f_k}\Vert(\overline{\Omega})=\Vert Dh\Vert(\overline{\Omega}).
\]
Thus $u$ is a solution to the Dirichlet problem with boundary
data $f$. This concludes the proof. 
\end{proof}


%

The above two stability results require the sequence $u_k$ to converge in $L^1_{\loc}(\Omega)$ (while the
second stability result above did not explicitly require this, it was an almost immediate consequence of the
hypothesis). The next proposition considers the weakest form of stability, namely, what happens when the sequence
$u_k$ is only known to converge pointwise almost everywhere to a function $u$ in $\Omega$.

Recall that given an extended real-valued function $u$ on $\Omega$, its super-level sets are sets of
the form $\{x\in\Omega\, :\, u(x)>t\}$ for $t\in \R$.

\begin{proposition}\label{pointws}
Let $\{u_k\}$ be a sequence of functions of least gradient in an open set $\Omega$, and let $u$ be a measurable
function on $\Omega$, finite-valued $\mu$-almost everywhere, such that 
$u_k\to u$ $\mu$-a.e.~in $\Omega$. Then the characteristic functions of the super-level sets $\{x\in\Omega\, :\, u(x)>t\}$ are functions of least gradient in $\Omega$ for almost every $t\in\R$. If in addition $\sup_{k\in\N,x\in K}|u_k(x)|=: M_K<\infty$ for every compact set $K\subset\Omega$, 
or if $\sup_k\Vert Du_k\Vert(K)=:C_K<\infty$ for every compact set $K\subset\Omega$,
then $u$ is of
least gradient in $\Omega$ and there is a subsequence of $\{u_k\}$ that converges in $L^1_{\loc}(\Omega)$ to
$u$. 
\end{proposition}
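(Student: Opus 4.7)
The plan is to treat the two assertions in turn, using Proposition~\ref{stab-min} as the main tool in each. For the first assertion, the task reduces to applying Proposition~\ref{stab-min} to the characteristic functions of super-level sets; for the second, the key step (in the harder BV-bounded subcase) is to upgrade a.e.\ convergence to $L^1_{\loc}$ convergence.

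\textbf{Super-level sets.} First I would invoke the standard consequence of the coarea formula~\eqref{eq:coarea} and a direct perturbation argument (as in~\cite{KKLS}) that if $w$ is of least gradient in $\Omega$, then for almost every $t$ the set $\{w>t\}$ is of minimal surface in $\Omega$; this is equivalent, via a coarea-based comparison of competitors of the form $\chi_E + g$, to $\chi_{\{w>t\}}$ being of least gradient in $\Omega$. Applying this to each $u_k$, and observing that $\mu(\{u=t\})=0$ for all but countably many $t$ (a consequence of Fubini applied on bounded pieces of $\Omega$), we get from $u_k\to u$ $\mu$-a.e.\ that $\chi_{\{u_k>t\}}\to\chi_{\{u>t\}}$ $\mu$-a.e.\ in $\Omega$ for almost every $t\in\R$. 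Since these functions are bounded by $1$, dominated convergence upgrades this to convergence in $L^1_{\loc}(\Omega)$, and Proposition~\ref{stab-min} concludes.

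\textbf{Second assertion, bounded case.} If $|u_k|\le M_K$ on each compact $K\subset\Omega$, then $|u|\le M_K$ $\mu$-a.e., and bounded convergence gives $u_k\to u$ in $L^1_{\loc}(\Omega)$, so Proposition~\ref{stab-min} yields that $u$ is of least gradient.

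\textbf{Second assertion, BV case.} The main obstacle is to deduce $L^1_{\loc}$ boundedness of $\{u_k\}$ from only the BV bound together with the a.e.\ convergence. Fix a ball $B=B(x_0,r)$ with $\lambda B\Subset\Omega$. By Egoroff's theorem there exists $A\subset B$ with $\mu(A)>0$ on which $u_k\to u$ uniformly, so $M_A:=\sup_k\sup_A|u_k|<\infty$. The BV form of the $(1,1)$-Poincar\'e inequality, combined with the hypothesis $\Vert Du_k\Vert(\lambda B)\le C_{\lambda B}$, gives
\[
\int_B |u_k-(u_k)_B|\,d\mu \le c_P r\,\frac{\Vert Du_k\Vert(\lambda B)}{\mu(\lambda B)}\,\mu(B)\le C_B
\]
uniformly in $k$. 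Integrating the triangle inequality $|(u_k)_B|\le|u_k(x)|+|u_k(x)-(u_k)_B|$ over $x\in A$ then yields a uniform bound on $|(u_k)_B|$, and hence on $\int_B|u_k|\,d\mu$; a covering argument extends this to arbitrary compact subsets of $\Omega$. With $u_k$ now bounded in $BV_{\loc}(\Omega)$, the compactness theorem for $BV$ (\cite[Theorem~3.7]{M}) produces a subsequence converging in $L^1_{\loc}(\Omega)$ to some limit, which the $\mu$-a.e.\ convergence identifies as $u$; a final application of Proposition~\ref{stab-min} completes the proof.
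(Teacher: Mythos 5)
Your argument is correct and rests on the same backbone as the paper's proof: everything is funneled through the stability result Proposition~\ref{stab-min}, first for the characteristic functions of super-level sets and then for the $u_k$ themselves once $L^1_{\loc}$-convergence is secured. The differences are in the intermediate steps, and in places your route is simpler. For the minimality of $\{u_k>t\}$ you invoke an a.e.-$t$ coarea selection argument, which is the least detailed part of your write-up; the paper instead proves (Lemma~\ref{min-surface-levels}, via the truncation identity of Lemma~\ref{lem:decomposition of BV function by truncation}) that $\chi_{\{u_k>t\}}$ is of least gradient for \emph{every} $t$, and you could simply cite that. Your identification of the exceptional levels as $\{t:\mu(\{u=t\})>0\}$, countable by disjointness and $\sigma$-finiteness, is exactly the paper's $K_t$ argument in different clothing. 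To pass from a.e.\ to $L^1_{\loc}$ convergence of the characteristic functions you use dominated convergence, which is more direct than the paper's detour through the De Giorgi inequality~\eqref{eq:De Giorgi inequality} and BV compactness; the paper needs those perimeter bounds anyway because it reduces the uniformly bounded case to the BV-bounded case via the coarea formula~\eqref{eq:coarea}, whereas you dispatch the bounded case immediately by bounded convergence. In the BV-bounded case your Egoroff argument for controlling the means $(u_k)_B$ is a genuine alternative to the paper's device of applying compactness to $u_k-(u_k)_B$ and then deducing convergence of $(u_k)_B$ from the a.e.\ convergence and the finiteness of $u$; both work. One small point to tighten: uniform convergence on $A$ bounds $\sup_A|u_k|$ only for large $k$, so either shrink $A$ further so that $u$ and the finitely many initial $u_k$ are bounded there (possible since all are finite a.e.), or observe that the initial terms already have finite $L^1(B)$ norms because each $u_k\in BV_{\loc}(\Omega)\subset L^1_{\loc}(\Omega)$.
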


To prove this proposition, we need the following two lemmas, which will also be quite useful in the study of
continuity properties of minimizers undertaken in the next section. The argument in the lemmas is based on 
Bombieri--De Giorgi--Giusti~\cite{BDeGG}.

\begin{lemma}\label{lem:decomposition of BV function by truncation}
Let $u\in\BV(X)$, and for a given $t\in\R$, let $u_1:=\min\{u,t\}$ and $u_2:=(u-t)_+$, so that $u=u_1+u_2$.
Then we have
\[
  \Vert Du_1\Vert+\Vert Du_2\Vert=\Vert Du\Vert
\]
in the sense of measures.
\end{lemma}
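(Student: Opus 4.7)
The plan is to prove both inequalities between the measures $\|Du_1\|+\|Du_2\|$ and $\|Du\|$, with the nontrivial direction handled via the coarea formula \eqref{eq:coarea}.

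First I would dispatch the easy direction $\|Du\|\le \|Du_1\|+\|Du_2\|$. Since $u=u_1+u_2$, for any approximating sequences $u_1^{(i)}\to u_1$ and $u_2^{(i)}\to u_2$ in $L^1_{\loc}$ with upper gradients $g_{u_1^{(i)}}$ and $g_{u_2^{(i)}}$, the functions $u_1^{(i)}+u_2^{(i)}$ converge to $u$ in $L^1_{\loc}$ and admit $g_{u_1^{(i)}}+g_{u_2^{(i)}}$ as upper gradients. Passing to the infimum in the definition of the total variation on an open set and then extending to Borel sets by outer regularity gives $\|Du\|(A)\le \|Du_1\|(A)+\|Du_2\|(A)$ for every Borel set $A$.

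The reverse inequality is where the coarea formula does the work. For any Borel set $F\subset X$, the super-level set identities
\[
\{u_1>s\}=\begin{cases}\{u>s\}, & s<t,\\ \emptyset, & s\ge t,\end{cases}\qquad \{u_2>s\}=\begin{cases}X, & s<0,\\ \{u>s+t\}, & s\ge 0,\end{cases}
\]
combined with \eqref{eq:coarea} and the fact that $P(X,F)=P(\emptyset,F)=0$, give
\[
\|Du_1\|(F)=\int_{-\infty}^{t}P(\{u>s\},F)\,ds,\qquad \|Du_2\|(F)=\int_{t}^{\infty}P(\{u>s\},F)\,ds.
\]
Adding these two identities and applying \eqref{eq:coarea} once more yields $\|Du_1\|(F)+\|Du_2\|(F)=\|Du\|(F)$ for every Borel set $F$. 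Combined with the opposite inequality already established, this gives equality as Borel measures, i.e., in the sense of measures.

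I do not expect any real obstacle here; the one small subtlety is that \eqref{eq:coarea} is stated in the excerpt for $u\in\BV(X)$ and Borel $F$, so one should observe at the start that $u_1,u_2\in\BV(X)$ (which follows from $u\in\BV(X)$ together with the contractive nature of the truncations, or simply from the subadditivity direction proved first, which already forces $\|Du_i\|(X)\le \|Du\|(X)<\infty$). Once this is in place, the coarea identities combine immediately and no further estimation is needed.
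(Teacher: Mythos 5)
Your argument is correct and is essentially the paper's proof: both reduce to applying the coarea formula \eqref{eq:coarea} to the super-level sets of $u_1$ and $u_2$, which immediately yields the equality $\Vert Du_1\Vert+\Vert Du_2\Vert=\Vert Du\Vert$ (so your preliminary subadditivity step is redundant, and the paper works on open sets first and then extends by approximation rather than on Borel sets directly). The only slip is the parenthetical claim that the subadditivity direction forces $\Vert Du_i\Vert(X)\le\Vert Du\Vert(X)$ --- it gives the opposite inequality --- but your alternative justification via the contractive ($1$-Lipschitz) nature of truncation is the correct one.
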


\begin{proof}
For any open set $G\subset X$, we have by the coarea formula~\eqref{eq:coarea} 
and by the fact that $P(G,G)=0=P(\emptyset,G)$ for open $G$,
\begin{align*}
 \Vert Du_1\Vert(G)&=\int_{-\infty}^\infty P(\{x\in G\, :\, u_1(x)>s\}, G)\, ds\\
    &=\int_{-\infty}^t P(\{x\in G\, :\, u_1(x)>s\}, G)\, ds\\
    &=\int_{-\infty}^t P(\{x\in G\, :\, u(x)>s\}, G)\, ds.
\end{align*}
Similarly, we have
\begin{align*}
 \Vert Du_2\Vert(G)&=\int_{-\infty}^\infty P(\{x\in G\, :\, u_2(x)>s\}, G)\, ds\\
   &=\int_0^\infty P(\{x\in G\, :\, u_2(x)>s\}, G)\, ds\\
   &=\int_0^\infty P(\{x\in G\, :\, u(x)-t>s\}, G)\, ds\\
   &=\int_t^\infty P(\{x\in G\, :\, u(x)>s\}, G)\, ds.\\
\end{align*}
Therefore we have by~\eqref{eq:coarea} again,
\[
 \Vert Du_1\Vert(G)+\Vert Du_2\Vert(G)=\int_{-\infty}^\infty P(\{x\in G\, :\, u(x)>s\},G)\, ds
    =\Vert Du\Vert(G),
\]
and since the variation measure of general sets is defined by approximation with open sets, we can conclude that 
$\Vert Du_1\Vert+\Vert Du_2\Vert=\Vert Du\Vert$.
\end{proof}

\begin{lemma}\label{min-surface-levels}
Let $\Omega\subset X$ be an open set, and suppose that $u$ is a function of least gradient in $\Omega$. Then for each 
$t\in\R$, the characteristic function $\chi_{E_t}$ of the super-level set
\[
 E_t:=\{x\in\Omega\, :\, u(x)>t\}
\]
is a function of least gradient in $\Omega$. 
\end{lemma}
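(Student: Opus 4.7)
The plan is to prove the statement in two steps: first reducing the claim to showing that $E_t$ is a set of minimal surface in $\Omega$ (in the sense of the definition in Section~2), and then establishing that reduced claim by constructing an explicit competitor for $u$.

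For the reduction, given $g\in BV_c(\Omega)$ with support $\widetilde K\Subset\Omega$, I would set $w:=\chi_{E_t}+g$ and observe that outside of $\widetilde K$ we have $w=\chi_{E_t}$, so for every $s\in[0,1)$ the super-level set $F_s:=\{w>s\}$ satisfies $F_s\Delta E_t\subset\widetilde K$. Granted the minimal surface property of $E_t$, one gets $P(E_t,\widetilde K)\le P(F_s,\widetilde K)$ for each such $s$ (using that $E_t$ and $F_s$ agree in the open set $X\setminus K_s$, where $K_s=\overline{F_s\Delta E_t}$, to localize the perimeter measures), and the coarea formula~\eqref{eq:coarea} applied to $w$ yields
\[
  \Vert Dw\Vert(\widetilde K)\ge\int_0^1 P(F_s,\widetilde K)\,ds\ge P(E_t,\widetilde K)=\Vert D\chi_{E_t}\Vert(\widetilde K),
\]
which is exactly what is needed for $\chi_{E_t}$ to be of least gradient.

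To prove the minimal surface claim, fix $F\subset X$ measurable with $K:=\overline{F\Delta E_t}\Subset\Omega$ and consider the natural competitor
\[
  \tilde u:=\chi_F\max\{u,t\}+(1-\chi_F)\min\{u,t\},
\]
which one checks equals $u$ on $\Omega\setminus(F\Delta E_t)$ and equals $t$ on $F\Delta E_t$, so $\tilde u-u$ has compact support in $K$ and $\tilde u\in BV_{\loc}(\Omega)$ by the Leibniz rule (Lemma~\ref{lem:Leibniz rule}). A direct computation of super-level sets gives $\{\tilde u>s\}=F\cup\{u>s\}$ for $s<t$ and $\{\tilde u>s\}=F\cap\{u>s\}$ for $s\ge t$. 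Applying the coarea formula~\eqref{eq:coarea} to both sides, the least gradient inequality $\Vert Du\Vert(K)\le\Vert D\tilde u\Vert(K)$ becomes
\[
  \int_{\R}P(\{u>s\},K)\,ds\le\int_{-\infty}^t P(F\cup\{u>s\},K)\,ds+\int_t^\infty P(F\cap\{u>s\},K)\,ds.
\]

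The main obstacle I expect is extracting the pointwise bound $P(E_t,K)\le P(F,K)$ from this integral relation. My intended route is to combine it with the submodularity $P(A\cup B,K)+P(A\cap B,K)\le P(A,K)+P(B,K)$ of the perimeter, applied with $A=F$ and $B=\{u>s\}$ pointwise in $s$, together with the fact that $\{u>s\}$ approaches $E_t$ in $L^1$ as $s\downarrow t$. Isolating the contribution of the slab of levels $s$ near $t$, via a change of variables or a limiting argument, should then produce the desired quantity $P(F,K)-P(E_t,K)$. A subtle point is that the level-set limit behaves cleanly only for almost every $t$ (namely those $t$ for which $\{u=t\}$ is negligible in both measure and perimeter); to obtain the conclusion for every $t\in\R$, I would first establish the claim for a.e.~$t$ by this coarea approach, and then recover the remaining $t$'s by taking a sequence $t_n\downarrow t$ of good levels together with the stability result Proposition~\ref{stab-min} applied to the approximating sequence $\chi_{E_{t_n}}$.
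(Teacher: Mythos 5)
Your reduction to the minimal-surface property of $E_t$ is sound (via the coarea formula and the locality of the perimeter measure), your competitor $\tilde u$ is admissible, and the computation of its super-level sets is correct. But the reduction lands you on a statement that is equivalent to, hence no easier than, the original one, and the gap is exactly where you place it; I do not believe your proposed repair closes it. Writing $A_s:=\{u>s\}$ and substituting the submodularity bound $P(F\cup A_s,K)+P(F\cap A_s,K)\le P(F,K)+P(A_s,K)$ into your coarea inequality cancels the terms $\int P(A_s,K)\,ds$ on both sides and leaves only the averaged statement
\[
0\le\int_{-\infty}^t\bigl[P(F,K)-P(F\cap A_s,K)\bigr]\,ds+\int_t^\infty\bigl[P(F,K)-P(F\cup A_s,K)\bigr]\,ds,
\]
in which the quantity $P(E_t,K)$ no longer appears and the integrands have no definite sign. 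The obstruction is structural: the single competitor $\tilde u$ perturbs every level set $A_s$ with $s$ between $t$ and $u(x)$, so the one scalar inequality $\Vert Du\Vert(K)\le\Vert D\tilde u\Vert(K)$ averages the cost of replacing $A_s$ by $F$ over a whole band of levels and cannot be localized to $s=t$. To ``isolate the slab near $t$'' you would need competitors whose perturbation is concentrated in the levels $(t,t+\eps)$, i.e.\ competitors built from $\tfrac1\eps\min\{\eps,(u-t)_+\}$ --- but using those requires knowing first that truncations of a least gradient function are again of least gradient, which is precisely the missing ingredient. (Your final step, upgrading from a.e.\ $t$ to every $t$ via $t_n\downarrow t$ and Proposition~\ref{stab-min}, is fine, but it rests on the a.e.\ claim that has not been established.)

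The paper sidesteps this difficulty entirely. Lemma~\ref{lem:decomposition of BV function by truncation} gives the exact additivity $\Vert Du_1\Vert+\Vert Du_2\Vert=\Vert Du\Vert$ for $u_1=\min\{u,t\}$ and $u_2=(u-t)_+$; combined with the minimality of $u$ and the subadditivity of the total variation, this yields, for $g\in BV_c(\Omega)$ with $K=\text{supt}(g)$,
\[
\Vert Du_1\Vert(K)+\Vert Du_2\Vert(K)=\Vert Du\Vert(K)\le\Vert D(u_1+g+u_2)\Vert(K)\le\Vert D(u_1+g)\Vert(K)+\Vert Du_2\Vert(K),
\]
so $u_1$ (and symmetrically $u_2$) is of least gradient. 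Iterating, $u_{t,\eps}=\tfrac1\eps\min\{\eps,(u-t)_+\}$ is of least gradient for every $\eps>0$; it converges to $\chi_{E_t}$ in $L^1_{\loc}(\Omega)$ by dominated convergence, and Proposition~\ref{stab-min} concludes --- for every $t$, not merely almost every $t$. Note that the only use of the coarea formula in this route is hidden inside Lemma~\ref{lem:decomposition of BV function by truncation}, where it produces an exact identity rather than an inequality to be disentangled; proving that truncations are minimizers is the lemma your argument is implicitly missing.
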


\begin{proof}
Take any $t\in\R$ and let $u_1,u_2$ be defined as in the previous lemma. Let $g\in BV_c(\Omega)$ with $K:=\text{supt}(g)$. 
Then we have by the previous lemma, the minimality of $u$, and the subadditivity of the total variation, 
\begin{align*}
   \Vert Du_1\Vert(K)+\Vert Du_2\Vert(K)=\Vert Du\Vert(K)&\le \Vert D(u+g)\Vert(K)\\
       &=\Vert D(u_1+u_2+g)\Vert(K)\\
       &\le \Vert D(u_1+g)\Vert(K)+\Vert Du_2\Vert(K).
\end{align*}
It follows that 
\[
  \Vert D u_1\Vert(K)\le \Vert D(u_1+g)\Vert(K),
\]
so that $u_1$ is also of least gradient in $\Omega$.
Mutatis mutandis we can show that $u_2$ is also of least gradient in $\Omega$. Hence we have that whenever 
$\eps>0$, the function 
\[
  u_{t,\eps}:=\frac1\eps\, \min\{\eps,\, (u-t)_+\}
\]
is of least gradient in $\Omega$. 
By the Lebesgue dominated convergence theorem, for any compact $K\subset\Omega$ it is true that
\[
  \int_K|u_{t,\eps}-\chi_{E_t}|\, d\mu=\int_{\{x\in K\, :\, 0<u(x)-t\le \eps\}}\left(1-\frac{u(x)-t}{\eps}\right)\, d\mu(x)
     \to 0
\]
as $\eps\to 0$.
Hence $u_{t,\eps}\to\chi_{E_t}$ in $L^1_{\loc}(\Omega)$ as $\eps\to 0$, 
from which, together with Proposition~\ref{stab-min},  we 
conclude that $\chi_{E_t}$ is of least gradient in $\Omega$.
This completes the proof of the lemma.
\end{proof}

\begin{proof}[Proof of Proposition~\ref{pointws}]
Since $u_k\to u$ almost everywhere in $\Omega$, it follows that for almost every $t\in\R$ we have
$\chi_{\{x\in\Omega\, :\, u_k(x)>t\}}\to\chi_{\{x\in\Omega\, :\, u(x)>t\}}$ almost everywhere in 
$\Omega$. Indeed, to see this, we set $N$ to be the collection of points $x\in\Omega$ such that
$u_k(x)$ does not converge to $u(x)$. Then $\mu(N)=0$. For any $t\in\R$ we have that if 
$u_k(x)\le t$ for a subsequence of $k$ but $u(x)>t$, then $x\in N$. So, setting
\[
  K_t:=\{x\in\Omega\setminus N\, :\, u_k(x)>t\text{ for a subsequence of }k\text{ but }u(x)\le t\},
\]
for each $x\in\Omega\setminus (K_t\cup N)$ we see that
$\chi_{\{y\in\Omega\, :\, u_k(y)>t\}}(x)\to\chi_{\{y\in\Omega\, :\, u(y)>t\}}(x)$. Note that
for $x\in K_t$, we have $u(x)=t$. Therefore when $s\ne t$ we have that $K_s\cap K_t$ is empty.
Thus the family $\{K_t\}_{t\in\R}$ is pairwise disjoint, and hence by the local finiteness of $\mu$ there is
at most a countable number of $t\in\R$ for which $\mu(K_t)>0$. We conclude that for almost every
$t\in\R$, $\chi_{\{x\in\Omega\, :\, u_k(x)>t\}}\to\chi_{\{x\in\Omega\, :\, u(x)>t\}}$ almost 
everywhere in $\Omega$. 

By Lemma~\ref{min-surface-levels} we know that $\chi_{\{x\in\Omega\, :\, u_k(x)>t\}}$ is of least gradient in
$\Omega$ for each such $t\in\R$, and so by the De Giorgi inequality \eqref{eq:De Giorgi inequality}, we know that whenever $B(x_0,2r)\Subset\Omega$,
\begin{equation}\label{eq:De Giorgi inequality 2}
  P(\{x\in\Omega: u_k(x)>t\},B(x_0,r))\le C\, \frac{\mu(B(x_0,r))}{r}.
\end{equation}
It follows that whenever $K$ is a compact subset of $\Omega$,
\[
   \sup_k P(\{x\in\Omega\, :\, u_k(x)>t\},K)\le \widetilde{C}_K<\infty,
\]
and so by the compact embedding of $BV(K)$ (see~\cite[Theorem 3.7]{M}), there is a subsequence of $\{u_k\}$ such that
$\chi_{\{x\in\Omega\, :\, u_k(x)>t\}}$ converges in $L^1(K)$ to $\chi_{\{x\in\Omega\, :\, u(x)>t\}}$. A diagonalization
argument now yields a subsequence, also denoted by $\{u_k\}$, such that
\[
  \chi_{\{x\in\Omega\, :\, u_k(x)>t\}}\to\chi_{\{x\in\Omega\, :\, u(x)>t\}}
\]
in $L^1_{\loc}(\Omega)$, and hence by Proposition~\ref{stab-min} we know that $\{x\in\Omega\, :\, u(x)>t\}$ is 
of least gradient in $\Omega$. This concludes the proof of the first part of the proposition.

Now suppose that in addition,
\[
  \sup_{k\in\N,\, x\in K}|u_k(x)|=:M_K<\infty
\]
for any compact $K\subset\Omega$.
Then by the coarea formula and \eqref{eq:De Giorgi inequality 2},
\begin{align*}
  \Vert Du_k\Vert(B(x_0,r))&=\int_{-M_{B(x_0,r)}}^{M_{B(x_0,r)}} P(\{x\in\Omega\, :\, u_k(x)>t\},B(x_0,r))\, dt\\
     &\le 2CM_{B(x_0,r)}\, \frac{\mu(B(x_0,r))}{r}<\infty.
\end{align*}
This implies that $C_K:=\sup_k\Vert Du_k\Vert(K)$ is finite for any compact $K\subset\Omega$, so this case reduces to the last case presented in the proposition.

Let us thus assume that whenever $K\subset\Omega$ is compact, we have $C_K<\infty$. Then 
for any ball $B(x_0,\lambda r)\Subset\Omega$ we have by the $(1,1)$-Poincar\'e inequality that
\[
  \sup_k\int_{B(x_0,r)}|u_k-(u_k)_{B(x_0,r)}|\, d\mu \le C\, r\, C_{\overline{B}(x_0,\lambda r)}<\infty.
\]
By using the compactness result \cite[Theorem 3.7]{M} again, the sequence of $BV$ functions $\{u_k-(u_{k})_{B(x_0,r)}\}_k$ has a subsequence that converges in $L^1_{\loc}(B(x_0,r))$ to some function $v\in BV(B(x_0,r))$. By picking a further subsequence if necessary, we have $u_k-(u_k)_{B(x_0,r)}\to v$ pointwise a.e. in $B(x_0,r)$. Since also $u_k\to u$ pointwise and $u$ is finite almost everywhere, the corresponding subsequence of the sequence $\{u_{k,B(x_0,r)}\}_k$ must also converge to some number
$\alpha_{B(x_0,r)}\in\R$. Thus we have $u_k\to u$ in $L^1_{\loc}(B(x_0,r))$, and by the lower semicontinuity of the total variation, $u\in\BV(B(x_0,r))$. Hence $u\in BV_{\loc}(\Omega)$. By covering $\Omega$ with balls and using a diagonal argument, we can pick a subsequence $u_k$ for which $u_k\to u$ in $L^1_{\loc}(\Omega)$, and then it follows from Proposition \ref{stab-min} that $u$ is of least gradient in $\Omega$.
\end{proof}

\section{Continuity of functions of least gradient}

An example in~\cite{HKL} shows that even when the boundary data $f$ is Lipschitz, in general
it is not true that there is a continuous 
solution to the Dirichlet problem for the area functional with
boundary data $f$. A minor modification of the example shows that the same phenomenon occurs also in the case of the Dirichlet problem for least gradients. This is in contrast to the Euclidean situation, where it is known that if the boundary of the domain
has strictly positive mean curvature (in a weak sense) and the boundary data is Lipschitz, then there is exactly one 
Lipschitz solution; see for example~\cite{SWZ}, \cite{P1}, \cite{P2}, \cite{Z}. The example in~\cite{HKL} is
in a Euclidean convex Lipschitz domain, equipped with a $1$-admissible weight in the sense of~\cite{HKM}. Hence even
with the mildest modification of the Euclidean setting, things can go wrong. 

We will show here that in a rather general setting of a metric measure space,
functions of least gradient are continuous everywhere outside their jump sets
(after modification on a set of measure zero  of course).


Recall that a function $u\in L^1_{\loc}(X)$ is approximately continuous outside a set of measure zero.
We can also redefine a function $u\in L^1_{\loc}(X)$ in a set of $\mu$-measure zero so that, outside the 
jump set $S_u:=\{x\in X\, :\, u^\vee(x)>u^\wedge(x)\}$, 
 $u$ is everywhere approximately continuous. To complete the proof that a function of least gradient is continuous outside its jump set, we need the upcoming theorem.

Note that as the characteristic
function of a cardioid shows, approximate continuity need not imply continuity, even under modification on a set of
measure zero. Furthermore, we know that for $u\in BV_{\loc}(X)$, the jump set $S_u$ is of $\sigma$-finite $\mathcal H$-measure; this follows from \cite[Theorem 5.3]{AMP}. Hence our claim that a function of least gradient, after modification on a null set, is continuous everywhere
outside its jump set, is quite strong.

\begin{theorem}\label{cont-min}
Let $\Omega$ be an open set and let $u$ be a function of least gradient in $\Omega$. 
Define $u$ at every point $x\in\Omega$ by choosing the representative $u(x)=u^{\vee}(x)$. If 
$x\in \Omega\setminus S_u$, then $u$ is continuous at $x$.
\end{theorem}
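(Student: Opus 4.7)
The plan is to pass from $u$ to its super-level sets $E_t:=\{u>t\}$, which by Lemma~\ref{min-surface-levels} are sets of minimal surface in $\Omega$, and then to exploit the fact that a set of minimal surface cannot have vanishing density at a point without being locally negligible there.

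Fix $x\in\Omega\setminus S_u$ and write $\alpha:=u^{\vee}(x)=u^{\wedge}(x)$; the goal is to show $u(y)\to\alpha$ as $y\to x$. Given $\varepsilon>0$, set $t:=\alpha+\varepsilon$ and $t':=\alpha-\varepsilon$. By Lemma~\ref{min-surface-levels}, both $E_t$ and $E_{t'}$ are sets of minimal surface in $\Omega$; moreover the complement of a set of minimal surface is again of minimal surface, since $\Vert D(1-\chi_F)\Vert=\Vert D\chi_F\Vert$ and the competitors in the definition can be swapped in sign. By the definitions of $u^{\vee}$ and $u^{\wedge}$, together with $\alpha<t$ and $\alpha>t'$, we have
\[
\lim_{r\to 0^+}\frac{\mu(E_t\cap B(x,r))}{\mu(B(x,r))}=0\quad\text{and}\quad\lim_{r\to 0^+}\frac{\mu((\Omega\setminus E_{t'})\cap B(x,r))}{\mu(B(x,r))}=0.
\]

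The next step is to invoke the standard density estimate for sets of minimal surface in the metric setting (available from \cite{KKLS}, and derivable from the De~Giorgi inequality~\eqref{eq:De Giorgi inequality} applied to $\chi_F$ combined with the relative isoperimetric inequality coming from the $(1,1)$-Poincar\'e inequality): there exist constants $\gamma>0$ and $r_1>0$, depending only on the doubling and Poincar\'e data and on $\dist(x,\partial\Omega)$, such that whenever $F$ is a set of minimal surface in $\Omega$ and $\mu(F\cap B(y,r))>0$ for every $0<r<r_1$, one has $\mu(F\cap B(y,r))\ge\gamma\,\mu(B(y,r))$ for all such $r$. Applied at $y=x$ to $F=E_t$ and to $F=\Omega\setminus E_{t'}$, the vanishing densities above force the existence of $r_0>0$ with
\[
\mu(E_t\cap B(x,r_0))=0\qquad\text{and}\qquad \mu((\Omega\setminus E_{t'})\cap B(x,r_0))=0,
\]
that is, $t'\le u\le t$ holds $\mu$-a.e.\ on $B(x,r_0)$. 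For any $y\in B(x,r_0/2)$ the ball $B(y,r_0/2)$ lies in $B(x,r_0)$, so from the definition of $u^{\vee}$ and the inequality $u^{\vee}\ge u^{\wedge}$ we obtain $t'\le u^{\wedge}(y)\le u^{\vee}(y)=u(y)\le t$. Thus $|u(y)-\alpha|\le\varepsilon$ throughout a full neighborhood of $x$, and letting $\varepsilon\to 0$ yields the continuity of $u$ at $x$.

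The main obstacle is securing the density estimate in exactly this form. If the version available in \cite{KKLS} does not apply verbatim, one must re-derive it: the De~Giorgi inequality provides an upper bound on $P(F,B(y,r))$ in concentric balls, and the relative isoperimetric inequality then furnishes the clean dichotomy between ``density zero'' and ``density at least $\gamma$''. A complementary route avoiding an explicit density estimate is a compactness argument based on Proposition~\ref{pointws}: if continuity failed, one could choose $y_n\to x$ with $u(y_n)\not\to\alpha$, extract along a subsequence $L^1_{\loc}$-limits of suitable characteristic functions of super-level sets, and use minimality of the limit (guaranteed by Proposition~\ref{stab-min}) to contradict the vanishing densities of $E_t$ and $\Omega\setminus E_{t'}$ at $x$.
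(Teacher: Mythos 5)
Your proposal is correct and follows essentially the same route as the paper: pass to the super-level (and sub-level) sets via Lemma~\ref{min-surface-levels}, then use the regularity theory of minimal sets from \cite{KKLS} to upgrade the density conditions coming from approximate continuity at $x$ to the statement that $\{u>u(x)+\varepsilon\}$ and $\{u<u(x)-\varepsilon\}$ have measure zero in a small ball around $x$, which gives two-sided bounds everywhere for the representative $u=u^{\vee}$. The only differences are cosmetic: the paper invokes the porosity theorem \cite[Theorem~5.2]{KKLS} for the regularized representative $\widehat{E}_{t-\varepsilon}$ and derives a contradiction at a would-be boundary point, where you instead quote the underlying measure-density dichotomy directly, and the paper first records that functions of least gradient are locally bounded (\cite[Theorem~4.2]{HKL}) so that $u^{\vee}(x)$ is finite --- a point you should also note before writing $t=\alpha+\varepsilon$.
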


\begin{proof}
Let 
$x\in\Omega\setminus S_u$, so that $u$ is approximately continuous at $x$, and let
\[
t:=u(x)=\aplim\limits_{\Omega\ni y\to x}u(y).
\]
We know that a function of least gradient is locally bounded, see \cite[Theorem~4.2, Remark 3.4]{HKL}. Thus 
$|t|<\infty$. Let $\eps>0$. We now show that, given the choice of representative $u=u^\vee$,
there exists $r_{\eps}>0$ such that $u\ge t-\eps$ in $B(x,r_{\eps})$. By the choice 
of representative $u=u^{\vee}$, it is enough to show that $\mu(B(x,r_{\eps})\setminus E_{t-\eps})=0$, where
\[
E_t:=\left\{x\in\Omega:\,u(x)>t\right\}.
\]
To do so, we will apply the porosity result of~\cite{KKLS} to the level sets of $u$. 

We know from Lemma~\ref{min-surface-levels} that the characteristic function $\chi_{E_{t-\eps}}$ of the super-level set $E_{t-\eps}$ is a function of least gradient, and so $E_{t-\eps}$ is a set of minimal surface (set of minimal boundary
surface in the language of~\cite{KKLS}). Define $\widehat{E}_{t-\eps}$ according to
\begin{equation}\label{eq:definition of porous representative}
\begin{split}
  \widehat{A}:=A\cup & \{x\in X\setminus A\, :\, \exists r_x>0\text{ with }\mu(B(x,r_x)\setminus A)=0\}\\
     &\ \ \ \ \ \ \ \ \ \ \ \ \ \ \setminus \{x\in A\, :\, \exists r_x>0\text{ with }\mu(B(x,r_x)\cap A)=0\}.
\end{split}
\end{equation}
Note that 
$\mu(E_{t-\eps}\Delta \widehat{E}_{t-\eps})=0$; the fact 
that $E_{t-\eps}$ (or $\widehat{E}_{t-\eps}$) is of minimal surface in $\Omega$ now implies that 
$\Omega\setminus \widehat{E}_{t-\eps}$ is locally porous in 
$\Omega$, see \cite[Theorem~5.2]{KKLS}. Now it is enough to show that $x\in\text{int}\, \widehat{E}_{t-\eps}$. 
Because $x\notin S_u$, we have that $u^\vee(x)=u^\wedge(x)=t$, and so
$x\not\in\text{ext}\,\widehat{E}_{t-\eps}$. Thus it suffices to show that
$x\not\in\partial \widehat{E}_{t-\eps}$. Let us assume, contrary to this, that $x\in\partial \widehat{E}_{t-\eps}$. By the 
porosity of $\Omega\setminus \widehat{E}_{t-\eps}$, this means that there exists $r_x>0$ and $C\geq 1$ such that whenever 
$0<r<r_x$, there is a point $z\in B(x,r/2)$ such that
\[
B(z,r/2C)\subset\Omega\setminus \widehat{E}_{t-\eps},
\]
where the constant $C$ is independent of $x$ and $r$. Now $B(z,r/2C)\subset B(x,r)$, and the doubling property 
of the measure gives that
\[
\mu(B(z,r/2C))\geq\gamma \mu(B(x,r)),
\]
where $0<\gamma<1$ is independent of $x$ and $r$. Thus
\[
\limsup\limits_{r\to 0}\frac{\mu\left(\{u> t-\eps\}\cap B(x,r)\right)}{\mu\left(B(x,r)\right)}\leq 1-\gamma<1.
\]
This contradicts the fact that the approximate limit of $u$ at $x$ is $t$, since $u^{\wedge}(x)=t$ implies that
\[
\lim\limits_{r\to 0}\frac{\mu\left(\{u\geq t-\eps/2\}\cap B(x,r)\right)}{\mu\left(B(x,r)\right)}=1.
\]
Therefore $x\not\in\partial \widehat{E}_{t-\eps}$, and hence $x\in\text{int}\, \widehat{E}_{t-\eps}$. As 
noted earlier, this implies that $u\ge t-\eps$ everywhere in $B(x,r_{\eps})$ for some $r_{\eps}>0$. By applying 
a similar argument to the 
sublevel sets $F_t:=\{x\in\Omega: u\le t\}$,
we get
$x\le t+\eps$ in $B(x,r_{\eps})$ for a possibly smaller $r_{\eps}>0$. Thus for every $\eps>0$ there exists 
$r_{\eps}>0$ such that $|u(x)-u(y)|\le\eps$ for all 
$y\in B(x,r)$. Hence $u$ is continuous at $x$.
\end{proof}

\section{Maximum principle}

In this section we prove a 
maximum principle for solutions to the Dirichlet problem for least gradients.
Note that  by considering truncations of the 
approximating sequences in the definition of the total variation, we have the following: if $f\in BV(X)$ with $M_1\le f\le M_2$, 
then there is a solution $u\in BV(X)$ to the Dirichlet problem for least gradients with boundary data $f$
such that  $M_1\le u\le M_2$.
Since we do not have a uniqueness result, this does not automatically imply that all solutions enjoy the same property;
one has to prove the maximum principle independently.

\begin{theorem}\label{max-princ}
If $M\in\R$, and $f\in BV(X)$ is such that $f\le M$ a.e. on $X$,  $\Omega\subset X$ is a
domain such that $\mu(X\setminus\Omega)>0$, and
$u\in BV(X)$ is a solution to the Dirichlet problem for least gradients in $\Omega$ with boundary
data $f$, then $u\le M$ a.e. in $\Omega$.
\end{theorem}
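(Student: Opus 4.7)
The natural strategy is to truncate $u$ at the level $M$ and show that the truncation is a competitor in the Dirichlet problem whose total variation cannot exceed that of $u$; combined with the additivity of total variation under truncation (Lemma~\ref{lem:decomposition of BV function by truncation}), this will force the excess part $(u-M)_+$ to vanish.

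Concretely, set $u_1:=\min\{u,M\}$ and $u_2:=(u-M)_+$, so that $u=u_1+u_2$ and $u_2\ge 0$. First I would check that $u_1$ is admissible as a competitor, i.e.\ that $u_1-f\in BV_0(\Omega)$. Since $u-f\in BV_0(\Omega)$, we have $u=f$ $\mu$-a.e.\ in $X\setminus\Omega$, and since $f\le M$ a.e., this gives $u\le M$ and hence $u_2=0$ $\mu$-a.e.\ in $X\setminus\Omega$. In particular $u_1=u$ and so $u_1-f=u-f\in BV_0(\Omega)$ (using that $u_2\in BV(X)$, which follows from $u\in BV(X)$ together with Lemma~\ref{lem:decomposition of BV function by truncation}). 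Also $u_2\in BV_0(\Omega)$.

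Next, by the minimality of $u$ as a solution to the Dirichlet problem,
\[
\Vert Du\Vert(\overline{\Omega})\le \Vert Du_1\Vert(\overline{\Omega}).
\]
On the other hand, Lemma~\ref{lem:decomposition of BV function by truncation} applied with $t=M$ gives
\[
\Vert Du\Vert(\overline{\Omega})=\Vert Du_1\Vert(\overline{\Omega})+\Vert Du_2\Vert(\overline{\Omega}).
\]
Combining these two displays forces $\Vert Du_2\Vert(\overline{\Omega})=0$.

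Finally, I would invoke the Poincar\'e-type inequality for functions in $BV_0(\Omega)$ already used in the proof of the previous stability proposition (cf.\ \cite[Lemma~2.2]{KKLS}), which applies because $\mu(X\setminus\Omega)>0$: there is a constant $C_0$ such that
\[
\int_\Omega |u_2|\,d\mu\le C_0\,\Vert Du_2\Vert(\overline{\Omega})=0.
\]
Hence $u_2=0$ $\mu$-a.e.\ in $\Omega$, i.e.\ $u\le M$ $\mu$-a.e.\ in $\Omega$. The main technical point, which I would be careful about, is the admissibility check: one needs $u=f$ a.e.\ outside $\Omega$ together with the hypothesis $f\le M$ a.e.\ to conclude that truncating at $M$ does not alter $u$ outside $\Omega$, so that $u_1-f$ still lies in $BV_0(\Omega)$. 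Everything else is a direct application of the truncation lemma and the $BV_0$-Poincar\'e inequality.
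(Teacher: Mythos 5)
Your decomposition $u=u_1+u_2$ with $u_1=\min\{u,M\}$ and $u_2=(u-M)_+$, the admissibility check that $u_1-f\in BV_0(\Omega)$ (using $u=f\le M$ a.e.\ outside $\Omega$), and the combination of Lemma~\ref{lem:decomposition of BV function by truncation} with the minimality of $u$ to obtain $\Vert Du_2\Vert(\overline{\Omega})=0$ coincide exactly with the paper's argument. The divergence is in the final step, and there is a genuine gap there: the inequality $\int_\Omega|v|\,d\mu\le C_0\,\Vert Dv\Vert(\overline{\Omega})$ for $v\in BV_0(\Omega)$, as quoted from \cite[Lemma~2.2]{KKLS} in the proof of the stability proposition for the Dirichlet problem, requires a ball $B\Supset\Omega$ with $\mu(B\setminus\Omega)>0$, i.e.\ it requires $\Omega$ to be \emph{bounded} (that proposition explicitly assumes boundedness, and $C_0$ depends on the radius of $B$). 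Theorem~\ref{max-princ} makes no such assumption: $\Omega$ is only a domain with $\mu(X\setminus\Omega)>0$. For unbounded $\Omega$ the global $L^1$ Poincar\'e inequality on $BV_0(\Omega)$ fails in general --- already in $\R^n$, testing with $\chi_{B(0,r)}$ for large $r$ inside an exterior domain shows that the ratio $\Vert v\Vert_{L^1}/\Vert Dv\Vert(X)$ is unbounded --- so you cannot conclude $u_2=0$ from $\Vert Du_2\Vert(\overline{\Omega})=0$ in this way.

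This is precisely why the paper takes a longer route after $\Vert Du_2\Vert(\overline{\Omega})=0$: the \emph{local} $(1,1)$-Poincar\'e inequality shows that $u_2$ is locally a.e.\ constant in $\Omega$, connectedness of the domain upgrades this to $u_2=L$ a.e.\ in $\Omega$ for a single constant $L$, and then $u_2=L\chi_\Omega\in BV_0(\Omega)$ together with $\Vert Du_2\Vert(\overline{\Omega})=0$ forces $P(\Omega,X)=0$ if $L\ne0$; a single ball straddling $\partial\Omega$ (which exists because both $\mu(\Omega)>0$ and $\mu(X\setminus\Omega)>0$) then yields a contradiction via the Poincar\'e inequality. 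Note that your argument never uses connectedness of $\Omega$, which the paper's hypothesis (``domain'') does supply and its proof does need --- a sign that your final step is importing a stronger assumption. Your proof is complete and correct in the special case of bounded $\Omega$; to cover the general case, replace the last step by the locally-constant-plus-perimeter argument just described.
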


\begin{proof}
Let $u_1:=\min\{u,M\}$ and $u_2:=(u-M)_+$, and note that because $u-f\in BV_0(\Omega)$ and $f\le M$, we also have that $u_1-f\in BV_0(\Omega)$. Hence
by Lemma~\ref{lem:decomposition of BV function by truncation} with $t=M$, 
\[
  \Vert Du_1\Vert(\overline{\Omega})+\Vert Du_2\Vert(\overline{\Omega})
   =\Vert Du\Vert (\overline{\Omega})\le \Vert Du_1\Vert(\overline{\Omega}),
\]
from which we see that $\Vert Du_2\Vert (\overline{\Omega})=0$. 
Therefore by the 
$(1,1)$-Poincar\'e inequality for BV functions, it follows that $u_2$ is locally a.e. constant in $\Omega$, and because $\Omega$ is connected, $u_{2}$ is a.e. constant in $\Omega$. We denote this constant by $L$. 

As pointed out above, $u_1-f\in BV_0(\Omega)$; it follows that since $u_1+u_2-f=u-f\in BV_0(\Omega)$, we must have
$u_2\in BV_0(\Omega)$, that is, $u_2=L\chi_\Omega\in BV(X)$. Therefore if $L\ne 0$, we must have that $P(\Omega,X)$
is finite, and because $\Vert Du_2\Vert(\overline{\Omega})=0$, we must in fact have that
$P(\Omega,X)=0$. 

On the other hand, since $\mu(X\setminus \Omega)>0$, we can find a ball $B=B(x,r)$
with $\mu(B\cap\Omega)>0$ and $\mu(B\setminus\Omega)>0$. Now by the $(1,1)$-Poincar\'e inequality for BV functions, we arrive at the contradiction
\[
0<\min\{\mu(B\cap\Omega),\mu(B\setminus\Omega)\}\le 2\int_B|\chi_{\Omega}-(\chi_{\Omega})_B|\,d\mu\le C r P(\Omega,B(x,\lambda r))=0.
\]
Hence it must be that $L=0$, that is, $u\le M$ a.e. in $\Omega$.
\end{proof}

Unlike in the nonlinear potential theory associated with $p$-harmonic functions for $p>1$, here
we cannot replace the condition $\mu(X\setminus\Omega)>0$ with the requirement $\text{Cap}_1(X\setminus\Omega)>0$, since
there are closed sets of measure zero but with positive $1$-capacity, and if $\mu(X\setminus\Omega)=0$, all functions $u\in\BV(X)$ satisfy any given boundary values $f\in\BV(X)$.

\begin{corollary}
If $M_1,M_2\in\R$, $f\in BV(X)$ is such that $M_1\le f\le M_2$ a.e. on $X$, and $\Omega\subset X$ is a domain such that $\mu(X\setminus\Omega)>0$, then
 every solution to the Dirichlet problem for least gradients in $\Omega$ with boundary data $f$ has the property that 
 $M_1\le u\le M_2$ a.e. in $\Omega$.
\end{corollary}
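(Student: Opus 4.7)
The plan is to deduce both bounds from Theorem~\ref{max-princ} by a simple symmetry argument. The upper bound $u\le M_2$ a.e.\ in $\Omega$ is immediate: the hypotheses $f\le M_2$ a.e.\ on $X$ and $\mu(X\setminus\Omega)>0$ together with the fact that $u$ is a solution with boundary data $f$ place us directly in the setting of Theorem~\ref{max-princ} with $M=M_2$, and so $u\le M_2$ a.e.\ in $\Omega$.

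For the lower bound $u\ge M_1$ a.e.\ in $\Omega$, the idea is to apply Theorem~\ref{max-princ} to $-u$ with boundary data $-f$. First I would verify that $-u$ is itself a solution to the Dirichlet problem for least gradients in $\Omega$ with boundary data $-f$. Membership in the correct admissibility class follows from $-u-(-f)=-(u-f)\in BV_0(\Omega)$, using that $BV_0(\Omega)$ is closed under negation. The minimality condition is equally routine: the map $g\mapsto -g$ is a bijection of $BV_0(\Omega)$ onto itself, and $\Vert D(-u)\Vert(\overline{\Omega})=\Vert Du\Vert(\overline{\Omega})$, so the minimality of $u$ transfers verbatim to $-u$.

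With this in hand, $-f\le -M_1$ a.e.\ on $X$ and Theorem~\ref{max-princ} applied with $M=-M_1$ yields $-u\le -M_1$ a.e.\ in $\Omega$, i.e.\ $u\ge M_1$ a.e.\ in $\Omega$. Combining with the upper bound gives the stated two-sided conclusion.

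I do not anticipate any real obstacle here; the only minor point worth writing out carefully is the verification that $-u$ solves the Dirichlet problem with data $-f$, since Definition~\ref{def_leastgrad_dir} is formulated asymmetrically in $u$ and the test perturbation $g$. Once that is noted, the corollary is simply Theorem~\ref{max-princ} applied to the pair $(u,f)$ and to the pair $(-u,-f)$.
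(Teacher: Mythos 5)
Your argument is correct and is exactly the intended one: the paper states this corollary without proof immediately after Theorem~\ref{max-princ}, the implicit reasoning being precisely the application of that theorem to $(u,f)$ with $M=M_2$ and to $(-u,-f)$ with $M=-M_1$. Your careful verification that $-u$ solves the Dirichlet problem with data $-f$ (using that $BV_0(\Omega)$ is closed under negation and that $\Vert D(-v)\Vert=\Vert Dv\Vert$, so $g\mapsto -g$ turns the minimality condition for $u$ into that for $-u$) is the only point that needs writing out, and you have it right.
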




\section{Regularity of minimizers of the functional $f(t)=\sqrt{1+t^2}$}

The area functional given by the function $f:[0,\infty)\to[0,\infty)$ with $f(t)=\sqrt{1+t^2}$
is much studied in the setting of the Bernstein problem. However, this functional does not satisfy 
either the condition $f(0)=0$ or $\lim_{t\to0^+}(f(t)-f(0))/t>0$, and hence the regularity results obtained so far
in this paper do not apply to this functional. However, by directly considering the meaning of minimizing this
functional, we obtain an analogous regularity result in this section.

While we stick to the model case $f(t)=\sqrt{1+t^2}$, it is easy to verify that the computations and results presented in this section also apply to more general functionals, where $f$ satisfies the growth conditions
\[
m(1+t) \le f(t) \le M(1+t)
\]
for all $t\ge 0$ and some constants $0<m\le M <\infty$. The only difference is that various constants will also depend on $m$ and $M$.

Given an open set $\Omega\subset X$ and a function $u\in L^1_{\loc}(\Omega)$, we define the functional by
\[
  \mathcal{F}(u,\Omega):=\inf\bigg\lbrace
     \liminf_{i\to\infty}\int_\Omega f(g_{u_i})\, d\mu\, :\, N^{1,1}_{\loc}(\Omega)\ni u_i\to u\text{ in }L^1_{\loc}(\Omega)\bigg\rbrace.
\] 
Here each $g_{u_i}$ is an upper gradient of $u_i\in N^{1,1}_{\loc}(\Omega)$. The above definition of $\mathcal{F}$ agrees with that of~\cite[Definition~3.2]{HKL}, since
under the assumption of a $(1,1)$-Poincar\'e inequality and the doubling condition of the measure, we know that Lipschitz continuous functions are dense in $N^{1,1}(X)$ (see for example~\cite{S}), 
and hence locally Lipschitz continuous
functions form a dense subclass of $N^{1,1}_{\loc}(\Omega)$ (see \cite[Theorem 5.47]{BB}).  
Note that if $\mathcal{F}(u,\Omega)$ is finite,
then necessarily $u\in BV_{\loc}(\Omega)$ with $\Vert Du\Vert(\Omega)<\infty$. It is shown
in~\cite{HKLL} that 
$\mathcal{F}(u,\cdot)$ extends to a Radon measure on $\Omega$.

We say that a function $u\in BV_{\loc}(\Omega)$ is a minimizer of the functional
$\mathcal{F}$, or an $\mathcal F$-minimizer, if
whenever $v\in BV_c(\Omega)$, we have $\mathcal{F}(u,K)\le \mathcal{F}(u+v,K)$,
where $K=\text{supt}(v)$.

Minimizers as considered in~\cite{HKL} are global minimizers, where the test functions
$v$ are required only to be in $BV_0(\Omega)$. Our notion of minimizers is a local one in this sense.
Clearly the results in~\cite{HKL} have local analogs in our setting. In particular, we know
from~\cite[Theorem~4.2]{HKL} that minimizers in our sense are locally bounded in $\Omega$.

To study the regularity properties of $\mathcal F$-minimizers, we consider a related metric measure space, $X\times\R$,
equipped with the measure $\mu\times\mathcal{H}^1$ (where $\mathcal{H}^1$ is the Lebesgue measure 
on $\R$), and the metric $d_\infty$ given by
\[
    d_\infty((x,t),(y,s)):=\max\{d(x,y), |t-s|\}
\]
for $x,y\in X$, $t,s\in\R$. Since both $X$ and $\R$ support a $(1,1)$-Poincar\'e inequality and $\mu$, $\mathcal{H}^1$ are
doubling measures, the product space $X\times\R$ equipped with $d_\infty$ and $\mu\times\mathcal{H}^1$
also supports a $(1,1)$-Poincar\'e inequality and $\mu\times\mathcal{H}^1$ is doubling; see~\cite[Proposition~1.4]{Sou}.

Given an $\mathcal{F}$-minimizer $u$ on $\Omega$, we define the subgraph
\[
  E_u:=\{(x,t)\in X\times\R\, :\, x\in\Omega,\, t\le u(x)\}.
\]
Note that for any $\mu$-measurable function $u$, the set $E_u$ is $\mu\times\mathcal H$-measurable in $X\times\R$, see \cite[p. 66]{EvaG92}.

We will use the next theorem in the study of regularity of $\mathcal{F}$-minimizers outside their jump sets.

\begin{theorem}\label{thm:Fminimizer quasiminimal surface}
Let $\Omega\subset X$ be an open set, and let $u\in\BV_{\loc}(\Omega)$ be an $\mathcal{F}$-minimizer. Then $E_u$ is
a set of quasiminimal surface in $\Omega\times\R$ equipped with $d_{\infty}$ and $\mu\times\mathcal{H}^1$.
\end{theorem}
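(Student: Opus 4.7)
My plan is to exploit the classical correspondence, originating with Massari and Miranda, between minimizers of the area functional and (quasi-)minimal surfaces of their subgraphs. The key analytic ingredient is the two-sided elementary inequality
\[
 f(s)\ \le\ 1+s\ \le\ \sqrt{2}\,f(s),\qquad s\ge 0,
\]
where $f(s)=\sqrt{1+s^2}$; the middle term $1+s$ is precisely the quantity that arises when one computes the perimeter of the subgraph $E_v$ in the product space $(X\times\R,d_\infty,\mu\times\mathcal H^1)$. Consequently any competitor set to $E_u$ can be traded for a competitor function to $u$ at the cost of a bounded multiplicative factor, which turns the $\mathcal F$-minimality of $u$ into quasi-minimality of $E_u$.

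The first technical step is to establish the two-sided comparison $\mathcal F(v,V)\le P(E_v,V\times I)\le \sqrt 2\,\mathcal F(v,V)$, modulo lower-order $\mu(V)$ terms, for open sets $V\Subset\Omega$ and intervals $I$ strictly containing $v(V)$. The upper bound is obtained by taking a Lipschitz approximating sequence $u_i\to u$ in $L^1_{\loc}(V)$ from the definition of $\mathcal F$, smoothing $\chi_{E_{u_i}}$ via
\[
  w_{i,\eps}(x,t):=\min\bigl\{1,\,\max\{0,(u_i(x)-t)/\eps\}\bigr\},
\]
whose upper gradient in the product $d_\infty$-metric is at most $(g_{u_i}(x)+1)/\eps$ on the slab $\{u_i(x)-\eps<t<u_i(x)\}$, and sending $\eps\to 0$ then $i\to\infty$ while invoking $g+1\le\sqrt 2\,f(g)$. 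The matching lower bound follows by applying the coarea formula~\eqref{eq:coarea} to $\chi_{E_v}$ in the vertical direction together with $f(s)\le 1+s$.

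The second step transfers $\mathcal F$-minimality of $u$ to quasi-minimality of $E_u$. Given a competitor set $F$ with $K:=\overline{F\Delta E_u}\Subset V\times I$, and choosing $I=(a,b)$ so that $a<u<b$ on $\overline V$ (possible by local boundedness of $\mathcal F$-minimizers, \cite[Theorem~4.2]{HKL}), I form the vertical rearrangement
\[
  v(x):=a+\mathcal H^1\bigl(F_x\cap(a,b)\bigr),\qquad F_x:=\{t\in\R:(x,t)\in F\}.
\]
Since $F_x=(-\infty,u(x)]\cap(a,b)$ for $x\notin\pi_X(K)$, one has $v\equiv u$ outside the compact $\pi_X(K)\subset V$, so $v$ is a valid $\mathcal F$-competitor. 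A Fubini/coarea decomposition of the product BV measure into horizontal super-level slices $\{v\ge t\}$ and vertical slices $F_x$ then yields $P(E_v,V\times I)\le P(F,V\times I)$: each vertical slice of $E_v$ is a bottom interval of the same $\mathcal H^1$-measure as $F_x$, and therefore has no larger vertical boundary, while the horizontal contribution is controlled by the lower semicontinuity of total variation. Chaining,
\[
 P(E_u,K)\ \le\ \sqrt 2\,\mathcal F(u,V)\ \le\ \sqrt 2\,\mathcal F(v,V)\ \le\ \sqrt 2\,P(E_v,V\times I)\ \le\ \sqrt 2\,P(F,V\times I),
\]
modulo lower-order $\mu(V)$ terms absorbed by a slight shrinkage of $V$, giving the required quasi-minimality with $Q$ depending only on the comparability ratio of $f$.

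The main obstacle I anticipate is the rearrangement inequality $P(E_v,V\times I)\le P(F,V\times I)$: in the Euclidean setting it reduces to Steiner symmetrization, but in the metric product $(X\times\R,d_\infty,\mu\times\mathcal H^1)$ one must rely on an abstract slicing decomposition of the total variation of $\chi_F$ into its horizontal and vertical parts, verify that each part is non-increasing under the layered rearrangement, and then pass to the limit using lower semicontinuity. This is where the product $(1,1)$-Poincar\'e inequality on $X\times\R$ noted in the excerpt and the coarea formula for $v$ on the factor $X$ enter essentially.
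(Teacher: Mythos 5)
Your overall strategy---replace the competitor set $F$ by its vertical rearrangement $v(x)=\mathcal H^1(F_x\cap(a,b))$ and play the $\mathcal F$-minimality of $u$ against a two-sided comparison between the functional and the subgraph perimeter---is the same as the paper's in spirit, and both your first inequality $P(E_u,K)\le\sqrt2\,\mathcal F(u,V)$ (the paper proves this with constant $2$, using \cite[Proposition 4.2]{AMP} in place of your explicit smoothing $w_{i,\eps}$) and the observation that $v$ is an admissible competitor for $u$ are fine. The gap is in the two remaining links of your chain, $\mathcal F(v,V)\le P(E_v,V\times I)$ and $P(E_v,V\times I)\le P(F,V\times I)$. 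Both presuppose that the total variation of a characteristic function on the product $(X\times\R,d_\infty,\mu\times\mathcal H^1)$ splits into a ``horizontal'' and a ``vertical'' part that can be estimated separately (an area formula for subgraphs in the first case, a Steiner-type monotonicity under vertical rearrangement in the second). No such decomposition is available under the standing hypotheses of the paper: this is precisely the tensorization problem, which \cite{APS} resolves only under an additional assumption on $X$, as the paper points out immediately after this theorem. The coarea formula \eqref{eq:coarea} you invoke decomposes $\Vert Dw\Vert$ on $X$ into perimeters of super-level sets of a function $w$; it does not slice a perimeter measure on $X\times\R$ in the vertical direction, so it cannot substitute for the missing decomposition. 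You correctly flag the rearrangement inequality as the main obstacle, but the argument stops exactly there.

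The paper's way around this is worth noting, because it bypasses the intermediate set $E_v$ entirely: one mollifies $\chi_F$ by a discrete convolution $\psi_\eps$, sets $v_\eps(x):=\int_{-2\eps}^\infty\psi_\eps(x,t)\,d\mathcal H^1(t)$, and uses the pointwise bound $\Lip_X\psi_\eps+|\partial_t\psi_\eps|\le 2\Lip\psi_\eps$ (valid because the $d_\infty$-Lipschitz constant dominates both factors) to get
\[
\int_U\Lip v_\eps\,d\mu+\int_U\int_{-2\eps}^\infty|\partial_t\psi_\eps(x,\cdot)(t)|\,d\mathcal H^1(t)\,d\mu(x)\le 2\int_{U\times\R}\Lip\psi_\eps\,d(\mu\times\mathcal H^1),
\]
together with $\int_{-2\eps}^\infty|\partial_t\psi_\eps(x,\cdot)(t)|\,d\mathcal H^1(t)\ge 1$ on each fiber (the mollified indicator must drop from $1$ to $0$), which supplies the $\mu(U)$ term of the area functional. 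Combined with the discrete-convolution estimate \eqref{eq:lipschitz constant of discrete convolution of F}, this yields $\mathcal F(v,U)\le C\Vert D\chi_F\Vert(\overline U\times\R)$ directly, with no rearrangement inequality and no area formula for $E_v$. A side effect is that the quasiminimality constant depends on the doubling and Poincar\'e data through the discrete convolution, not merely on the ratio $\sqrt2$ between $1+s$ and $\sqrt{1+s^2}$ as your sketch suggests.
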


\begin{proof}
Take any open set $U\Subset\Omega$, and let $u_i\in N^{1,1}_{\loc}(U)$ be a sequence with $u_i\to u$ in $L^1_{\loc}(U)$ and upper gradients $g_i$ such that
\[
\lim_{i\to\infty}\int_{U}\sqrt{1+g_i^2}\,d\mu=\mathcal F(u,U).
\]
Now we have for the subgraph $E_u$, by using \cite[Proposition 4.2]{AMP} in the first inequality below,
\begin{equation}
\begin{split}\label{eq:perimeter estimated by functional}
\Vert D\chi_{E_u}\Vert (U\times\R)\le \mu(U)+&\Vert Du\Vert(U) \le \liminf_{i\to\infty}\int_{U}1+g_i\,d\mu\\
&\le2\liminf_{i\to\infty}\int_{U}\sqrt{1+g_i^2}\,d\mu=2\mathcal F(u,U).
\end{split}
\end{equation}

In proving a converse inequality, we need to consider competing sets for $E_u$ that are not necessarily subgraphs of functions. By \cite[Theorem 4.2]{HKL} we know that $u$ is locally bounded, since it is a minimizer of $\mathcal F$. Suppose that $F\subset\Omega\times\R$ is a $\mu\times\mathcal H$-measurable set
such that $\overline{F\Delta E_u}$ is a compact subset of $\Omega\times\R$.
We need to show that
\begin{equation}\label{eq:goal:subgraph is quasiminimizer} 
 \Vert D\chi_{E_u}\Vert(\overline{F\Delta E_u})\le C \Vert D\chi_F\Vert(\overline{F\Delta E_u}).
\end{equation}
Note that by the compactness of $\overline{F\Delta E_u}$, there is an open set $U\Subset\Omega$ that contains the projection of $\overline{F\Delta E_u}$ to $X$.
Since $\overline{F\Delta E_u}$ is compact in $\Omega\times\R$ and $u$ is bounded in $U$, we can assume that $u\ge 0$ in $U$, and that
$(x,t)\in F$ for all $x\in U$ and $t\le 0$.
Let us set $v:U\to[0,\infty]$ to be the function
\[
   v(x)=\int_{0}^\infty \chi_F(x,t)\, d\mathcal{H}^1(t)=\mathcal{H}^1(\{x\}\times[0,\infty) \cap F);
\]
note that this is $\mu$-measurable by Fubini's theorem.

Now $v$ and $u$ may differ only in a compact subset of $U$ (namely, the projection of
$\overline{F\Delta E_u}$ to $X$).
Let $B_i^{\eps}$ be a covering of $U\times\R$ by balls of radius $\eps$, with bounded overlap, where $\eps$ will be chosen shortly,
and let $\pip_i^{\eps}$ be a corresponding partition of unity by Lipschitz functions.
For each $\eps>0$, the discrete convolution
 $\psi_\eps:=\sum_i(\chi_{F})_{B_i^\eps}\pip_i^\eps$ is $C/\eps$-Lipschitz, converges
 to $\chi_{F}$ in $L^1(U\times\R)$ as $\eps\to 0$, and satisfies (below, $U_{C\eps}$ denotes the $C\eps$-neighborhood of $U$)
\begin{equation}\label{eq:lipschitz constant of discrete convolution of F}
 \begin{split}
   \limsup_{\eps\to 0^+}\int_{U\times\R}\Lip\psi_\eps\, d(\mu\times\mathcal{H}^1)
     &\le C\limsup_{\eps\to 0^+}\Vert D\chi_{F}\Vert(U_{C\eps}\times\R)\\
     &\le C\Vert D\chi_{F}\Vert(\overline{U}\times\R) .
 \end{split}
\end{equation}
For the construction of a discrete convolution and its properties, see e.g. the proof of \cite[Theorem 6.5]{KKST1}. Above we need to have $U_{C\eps}\subset\Omega$, but this is true for small enough $\eps$, since $U\Subset\Omega$. We also need to have $\Vert D\chi_{F}\Vert(U_{C\eps}\times\R)<\infty$, but we can assume this, since otherwise \eqref{eq:goal:subgraph is quasiminimizer} necessarily holds. Similarly we can assume that $\Vert D\chi_F\Vert(\partial U\times \R)=0$.

Recall the definition of the local Lipschitz constant $\Lip$ from \eqref{eq:definition of local lipschitz constant}. For a function $h(x,t)$ in the product space $X\times\R$, we will also use the notation
\[
\Lip_X h(x,t):=\limsup_{r\to 0^+}\sup_{y\in B(x,r)\setminus\{x\}}\frac{h(y,t)-h(x,t)}{d(y,x)}.
\] 
Fix $\eps>0$ and set $v_\eps$ on $U$ to be the function
 \[
   v_\eps(x):=\int_{-2\eps}^\infty\psi_\eps(x,t)\, d\mathcal{H}^1(t);
 \]
this clearly converges to $v$ in $L^1(U)$ as $\eps\to 0$.
For $x,y\in U$ we have 
 \[
   |v_\eps(x)-v_\eps(y)|\le \int_{-2\eps}^\infty |\psi_\eps(x,t)-\psi_\eps(y,t)|\, d\mathcal{H}^1(t).
 \]
Here we only need to integrate over a finite interval, since $u$ is bounded in $U$ and $F$ is a perturbation of the subgraph $E_u$ in a bounded set. Since $\psi_{\eps}$ is Lipschitz continuous,
$v_\eps$ is also Lipschitz continuous in $U$. 
By taking a limit as $y\to x$, we get by Lebesgue's dominated convergence theorem
 \[
   \Lip v_\eps(x)\le \int_{-2\eps}^\infty \Lip_X\psi_\eps(x, t)\, d\mathcal{H}^1(t).
 \]
Now note that
 \begin{align*}
  \int_U\Lip v_\eps(x)\, d\mu(x)
  &+\int_U\int_{-2\eps}^\infty |\partial_t\psi_\eps(x,\cdot)(t)|\, d\mathcal{H}^1(t)\, d\mu(x)\\
    &\le \int_U\int_{-2\eps}^\infty [\Lip_X\psi_\eps(\cdot, t)(x)
+|\partial_t\psi_\eps(x,\cdot)(t)|]\, d\mathcal{H}^1(t)\,d\mu(x)\\
    &\le 2\int_{U\times\R}\Lip \psi_\eps(x,t)\, d(\mu\times\mathcal{H}^1)(x,t).
 \end{align*}
By combining this with \eqref{eq:lipschitz constant of discrete convolution of F}, we get
\begin{align*}
 \limsup_{\eps\to 0^+}\Bigg[\int_U\Lip v_\eps(x)\, d\mu(x)
    &+\int_U\int_{-2\eps}^\infty |\partial_t\psi_\eps(x,\cdot)(t)|\, d\mathcal{H}^1(t) \, d\mu(x)\Bigg]\\
  & \le C \Vert D\chi_{F}\Vert(\overline{U}\times\R).
\end{align*}
 On the other hand, by the fact that $\psi_\eps$ is Lipschitz continuous and for each $x\in U$,
 $\psi_\eps(x,t)$ goes from $1$ to $0$ as $t$ increases from $-2\eps$ to $\infty$, we see that
 \[
   \int_{-2\eps}^\infty |\partial_t\psi_\eps(x,\cdot)(t)|\, d\mathcal{H}^1(t)\ge 1.
 \]
 Thus
 \[
 \limsup_{\eps\to 0^+}\int_U\Lip v_\eps(x)\, d\mu(x)
    +\mu(U)\le C \Vert D\chi_{F}\Vert(\overline{U}\times\R).
 \]
It follows that $v\in BV(U)$ with
\begin{equation}\label{eq:functional estimated by perimeter}
\begin{split}
  \mathcal{F}(v,U)\le \limsup_{\eps\to 0}\int_U\sqrt{1+(\Lip v_{\eps})^2}\,d\mu &\le \limsup_{\eps\to 0} \int_U1+\Lip v_{\eps}\,d\mu\\
&\le C \Vert D\chi_F\Vert(\overline{U}\times\R).
\end{split}
\end{equation}
Now, by using \eqref{eq:perimeter estimated by functional}, the fact that $u$ is an $\mathcal{F}$-minimizer, and finally \eqref{eq:functional estimated by perimeter}, we see that
\begin{align*}
 \Vert D\chi_{E_u}\Vert (U\times\R)\le 2\mathcal{F}(u,U)
     &\le 2\mathcal{F}(v,U)\\
     &\le C  \Vert D\chi_F\Vert(\overline{U}\times\R)=C\Vert D\chi_F\Vert(U\times\R),
\end{align*}
since we had $\Vert D\chi_F\Vert(\partial U\times \R)=0$.
The above implies \eqref{eq:goal:subgraph is quasiminimizer}, so that $E_u$ is a set of quasiminimal 
surface in $\Omega\times\R$.
\end{proof}

A more thorough analysis of the relationship between the area functional $\mathcal F(u,\Omega)$ and the 
perimeter of the subgraph $P(E_u,\Omega\times\R)$ has recently been conducted by Ambrosio et al. in \cite{APS}
under an additional assumption on the metric space $X$ which is different from the assumptions made in this 
paper. Indeed, \cite{APS} considers functions on more general product spaces $X\times Y$ with $X,Y$ satisfying
an assumption concerning equality between two notions of minimal gradients, whereas we consider the simpler case 
$Y=\mathbb{R}$. Given these differing assumptions, neither the following theorem nor~\cite[Theorem~5.1]{APS} is a 
special case of the other.

As in Section~4, by modifying a function $u\in BV_{\loc}(\Omega)$ on a 
set of measure zero if necessary, we can assume that whenever $x\in\Omega\setminus S_u$, we have
\[
  \aplim_{y\to x}u(y)=u(x).
\]
That is, every point in $\Omega\setminus S_u$ is a point of approximate continuity of $u$.

\begin{theorem} 
Let $\Omega\subset X$ be an open set, and let $u\in BV_{\loc}(\Omega)$ be an $\mathcal F$-minimizer. 
Define $u$ at every point $x$ by choosing the representative $u(x)=u^{\vee}(x)$. If $x\in\Omega\setminus S_u$,
then $u$ is continuous at $x$.
\end{theorem}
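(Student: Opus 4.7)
The plan is to mimic the proof of Theorem~\ref{cont-min} in the product space $(X\times\R,d_\infty,\mu\times\mathcal{H}^1)$, replacing super-level sets by the subgraph $E_u$. By Theorem~\ref{thm:Fminimizer quasiminimal surface}, $E_u$ is a set of quasiminimal surface in $\Omega\times\R$, and this product space inherits doubling and a $(1,1)$-Poincar\'e inequality from the factors via~\cite[Proposition~1.4]{Sou}. Consequently the porosity result underlying the proof of Theorem~\ref{cont-min} (the natural extension of~\cite[Theorem~5.2]{KKLS} to the quasiminimal setting) implies that the topological boundary $\partial\widehat{E}_u$ is porous in $\Omega\times\R$, where $\widehat{E}_u$ is defined from $E_u$ via~\eqref{eq:definition of porous representative}.

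Fix $x\in\Omega\setminus S_u$, set $t:=u(x)=u^\vee(x)=u^\wedge(x)$, which is finite by the local boundedness of $\mathcal F$-minimizers~\cite[Theorem~4.2]{HKL}, and let $\eps>0$. I would first establish the lower bound $u\ge t-\eps$ on a ball around $x$ by considering the point $(x,t-\eps)\in X\times\R$. A direct Fubini computation using $u^\wedge(x)=t$ shows that
\[
 \lim_{r\to 0^+}\frac{(\mu\times\mathcal{H}^1)(E_u\cap B((x,t-\eps),r))}{(\mu\times\mathcal{H}^1)(B((x,t-\eps),r))}=1,
\]
so $(x,t-\eps)\notin \text{ext}(\widehat{E}_u)$. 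The verbatim porosity/doubling argument of Theorem~\ref{cont-min} then rules out $(x,t-\eps)\in\partial\widehat{E}_u$, forcing $(x,t-\eps)\in\text{int}(\widehat{E}_u)$, so some ball $B((x,t-\eps),r)$ is contained in $\widehat E_u$. For each $y\in B(x,r)$ the point $(y,t-\eps)$ is either in $E_u$, giving $u(y)\ge t-\eps$ directly, or lies in $\widehat{E}_u\setminus E_u$, in which case~\eqref{eq:definition of porous representative} together with Fubini forces $u(z)\ge t-\eps+r'$ for $\mu$-a.e.\ $z$ in a ball around $y$ and some $r'>0$; this yields $u^\wedge(y)\ge t-\eps+r'$, and since we have chosen $u=u^\vee\ge u^\wedge$, in either case $u(y)\ge t-\eps$.

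The upper bound is entirely symmetric: the point $(x,t+\eps)$ has density $0$ for $E_u$ (by the analogous Fubini computation with $u^\vee(x)=t$), and the same porosity argument then places it in $\text{int}((\Omega\times\R)\setminus\widehat{E}_u)$, which gives $u(y)\le t+\eps$ on a possibly smaller ball around $x$. Combining the two bounds yields continuity at $x$. The main obstacle I anticipate is this translation step from the \emph{measure-theoretic} inclusion $B((x,t\mp\eps),r)\subset\widehat{E}_u$ (or its complement) to the \emph{pointwise} bound on $u=u^\vee$: this is precisely where the hypothesis $x\notin S_u$, which forces $u^\wedge=u^\vee$, plays its essential role by aligning the chosen pointwise representative with the natural one detected by $\widehat{E}_u$. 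A secondary technical point is verifying that the porosity result of~\cite{KKLS} extends to the quasiminimal setting in use here, but its De Giorgi-type proof is insensitive to the quasiminimizing constant.
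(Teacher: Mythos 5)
Your proof is correct, and it rests on the same two pillars as the paper's: Theorem~\ref{thm:Fminimizer quasiminimal surface} and the regularity results of \cite{KKLS} for quasiminimal sets (porosity and the coincidence of measure-theoretic and topological boundaries); as you suspected, \cite[Theorem~5.2]{KKLS} is already stated for quasiminimal boundary surfaces, so no extension is needed there. The difference is in how the porosity is deployed. The paper argues by contradiction: assuming $u\le t-\eps$ at some $y\in B(x,r)$ for every small $r$, it observes that $(y,u(y))\in\partial E_u$, uses porosity at that point to produce a ball $B(w,r/C)\subset B(x,3r)$ on which $u<t-\eps/2$ almost everywhere, and contradicts approximate continuity at $x$ because $\mu(B(w,r/C))$ is comparable to $\mu(B(x,3r))$. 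You instead transplant the proof of Theorem~\ref{cont-min} directly: the single point $(x,t-\eps)$ has $E_u$-density $1$ by Fubini and $u^\wedge(x)=t$, porosity excludes it from $\partial\widehat{E}_u$, hence it is an interior point of $\widehat{E}_u$, and the resulting product-space ball inclusion is converted into the pointwise bound $u=u^\vee\ge t-\eps$ on a ball around $x$. The conversion step you flag is indeed the only delicate point and your treatment of it is sound: for $(y,t-\eps)\in\widehat{E}_u\setminus E_u$, the definition \eqref{eq:definition of porous representative} yields a ball $B_\infty((y,t-\eps),r_y)$ whose complement in $E_u$ is null, whence by Fubini $u\ge t-\eps+r_y$ a.e.\ near $y$ and so $u^\vee(y)\ge u^\wedge(y)>t-\eps$. (A small correction of emphasis: this conversion uses only the choice of representative $u=u^\vee$, not the hypothesis $x\notin S_u$; the latter enters earlier, in the two density computations at $(x,t\mp\eps)$, which require $u^\wedge(x)=u^\vee(x)=t$ respectively.) The paper's route avoids the conversion by first recording the structure of $\widehat{E}_u$ in terms of $u^\wedge$ and $u^\vee$ and replacing $\widehat{E}_u$ by the subgraph of $u^\vee$; yours keeps the argument closer to Section~4 at the cost of some extra Fubini bookkeeping. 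Both are valid.
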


\begin{proof}

According to Theorem \ref{thm:Fminimizer quasiminimal surface}, $E_u$ is a set of quasiminimal surface in $\Omega\times\R\subset X\times\R$ equipped with 
$d_\infty$ and $\mu\times\mathcal{H}^1$, and hence by \cite{KKLS} has the properties of 
$[\Omega\times\R]\cap \partial^*\widehat{E}_u=[\Omega\times\R]\cap\partial \widehat{E}_u$
and porosity,
where $\widehat{E}_u$ is defined according to \eqref{eq:definition of porous representative}.
Since $[\Omega\times\R]\cap\partial^*\widehat{E}_u=[\Omega\times\R]\cap\partial \widehat{E}_u$, for any $x\in\Omega$ we necessarily have $(x,t)\in \widehat{E}_u$ for $t<u^{\wedge}(x)$ and $(x,t)\notin \widehat{E}_u$ for $t>u^{\vee}(x)$.
Moreover, whenever $u^{\wedge}(x)\le t\le u^{\vee}(x)$ and $r>0$, we must have
\begin{equation}\label{eq:subgraph jump set}
\mu\times\mathcal H^1(B((x,t),r)\cap \widehat{E}_u)>0,\qquad\mu\times\mathcal H^1(B((x,t),r)\setminus \widehat{E}_u)>0.
\end{equation}
If we add all points $(x,t)$ with $t\le u^{\vee}(x)$ to $\widehat{E}_u$, we get precisely the subgraph $E_u$ of $u=u^{\vee}$. Clearly $\mu\times\mathcal H^1(E_u\Delta\widehat{E}_u)=0$, and by \eqref{eq:subgraph jump set} we have $\partial E_u=\partial \widehat{E}_u$, so that we also have $[\Omega\times\R]\cap \partial^*E_u=[\Omega\times\R]\cap\partial E_u$, and the porosity property holds for $E_u$ and its complement.
 
Let $x\in\Omega\setminus S_u$, so that $x$ is a point of approximate continuity of $u$, and let $t:=u(x)$. Since $\mathcal F$-minimizers are locally bounded by \cite[Theorem 4.1]{HKL}, we have $|t|<\infty$. We wish to show that whenever
$\eps>0$, there is a ball $B(x,20r)\subset\Omega$ such that $u>t-\eps$ on $B(x,r)$. Suppose not, then
there is some $\eps>0$ such that whenever $r>0$ with $B(x,20r)\subset \Omega$, there is some 
$y\in B(x,r)$ with $u(y)\le t-\eps$.  In particular, we can choose $0<r<\eps/2$.

Since $(y,u(y))\in \partial E_u$, by the porosity of $X\setminus E_u$ (see \cite[Theorem 5.2]{KKLS}) we know that there is some point $(w,s)$ in the ball
(with respect to the metric $d_\infty$) $B(y,r)\times(u(y)-r,u(y)+r)$ such that the $d_\infty$-ball 
$B(w,r/C)\times(s-r/C,s+r/C)$ lies in $[\Omega\times\R]\setminus E_u$. Note that to use this result, we need $B(x,20r)\subset\Omega$. Now for a.e.
$z\in B(w,r/C)$, we have $u(z)\le s-r/C$. Since $|s-u(y)|<r$, we have
\[
  s<r+u(y)\le r+t-\eps.
\]
Thus for a.e. $z\in B(w,r/C)\subset B(x,3r)$,
\[
  u(z)<(r+t-\eps)-\frac{r}{C}<r+t-\eps<\frac{\eps}{2}+t-\eps=t-\frac{\eps}{2}.
\]
Since this is true for every $r<\eps/2$, and the measure $\mu(B(w,r/C))$ is comparable to $\mu(B(x,3r))$ by the doubling property of $\mu$, we cannot have the approximate limit of $u$ at $x$ be $t$. This is a 
contradiction of the assumption that $x$ is a point of approximate continuity of $u$ with $u(x)=t$. Thus for each 
$\eps>0$, there is some $r>0$ such that $u>t-\eps$ on $B(x,r)$.

A similar argument shows that for each $\eps>0$, there is some $r>0$ such that $u<t+\eps$ on $B(x,r)$. That is,
$u$ is continuous at $x$.
\end{proof}

Recall that even in a weighted Euclidean setting we cannot insist on $u$ being continuous everywhere (that is, we cannot insist on the 
jump set $S_u$ being empty), as demonstrated by~\cite[Example~5.2]{HKL}.

\noindent Address:\\

\noindent (H.H.): Department of Mathematical Sciences, P.O. Box 3000, FI-90014 University of Oulu, Finland. \\
E-mail: {\tt heikki.hakkarainen@oulu.fi}

\medskip

\noindent (R.K.): Department of Mathematics and Statistics, P.O. Box 68, FI-00014 University of Helsinki, Finland. \\
E-mail: {\tt riikka.korte@helsinki.fi} 

\medskip

\noindent (P.L.): Aalto University, School of Science and Technology, Department of 
Mathematics, P.O. Box 11100, FI-00076 Aalto, Finland. \\ 
E-mail: {\tt panu.lahti@aalto.fi}

\medskip

%

\noindent (N.S.): Department of Mathematical Sciences, P.O.Box 210025, University of Cincinnati, Cincinnati, OH 45221{0025, U.S.A. \\
E-mail: {\tt shanmun@uc.edu}

\end{document}